\theoremstyle{plain}
\newtheorem{theorem}{Theorem}[section]
\newtheorem{lemma}[theorem]{Lemma}
\newtheorem{corollary}[theorem]{Corollary}
\newtheorem{proposition}[theorem]{Proposition}
\newtheorem*{claim}{Claim}
\theoremstyle{definition}
\newtheorem*{remark}{Remark}
\def\1{\mathbf 1}
\def\F{\mathcal{F}}
\def\Mod{\mathrm{Mod}}
\def\A{\mathcal A}
\def\F{\mathcal F}
\def\L{\mathcal L}
\def\link{\mathrm{link}}
\title{Injective maps between flip graphs}
\date{\today}
\begin{document}

\author{Javier Aramayona}
\address{Institut de Math\'ematiques de Toulouse, Universit\'e Paul Sabatier, 118 Route de Narbonne, 31062 Toulouse, Cedex 4, France}
\email{Javier.Aramayona@math.univ-toulouse.fr}

\author{Thomas Koberda}
\address{Department of Mathematics, Yale University, 20 Hillhouse Ave, New Haven, CT 06520, USA}
\email{thomas.koberda@gmail.com}

\author{Hugo Parlier}
\address{Department of Mathematics, University of Fribourg, Chemin du Mus\'ee 23, CH--1700 Fribourg, Switzerland}
\email{hugo.parlier@unifr.ch}

\maketitle

\begin{abstract}
We prove that  every injective simplicial map  $\F(S) \to \F(S')$ between flip graphs is induced by a subsurface inclusion $S\to S'$, except in finitely many cases. This extends a result of Korkmaz--Papadopoulos which asserts that every automorphism of the flip graph of a surface without boundary is induced by a surface homeomorphism. \end{abstract}

\section{Introduction}

Consider a compact, connected and orientable surface $S$, of genus $g\ge 0$ with $b\ge 0$ boundary components. Moreover, assume that $S$ has $p+q>0$ marked points, with $p\ge 0$  in the interior of $S$ and the other $q\ge 0$ in $\partial S$, subject to the condition that every component of $\partial S$ must contain at least one marked point.  When convenient, we sometimes think of marked points in the interior of $S$ as punctures.

By an {\em arc} on $S$ we will mean the homotopy class (relative to the marked points) of an arc properly contained in $S$, and which intersects the set of marked points only at its endpoints.  A {\em multiarc} is a collection of arcs on $S$  with pairwise disjoint interiors. A maximal multiarc is called a {\em triangulation}; observe that every triangulation of $S$ contains exactly  $d(S) = 6g + 3b + 3p + q - 6$ arcs. 
The {\em flip graph}  $\F(S)$ is the simplicial graph whose vertices are triangulations of $S$, and where two triangulations are adjacent  if and only if they share exactly $d(S)-1$ arcs; note this implies that the remaining two arcs intersect exactly once.  The reader may wish to note the equivalence of the flip graph and the \emph{triangulation graph} used by other authors, such as M. Bell \cite{Bell}.

Observe that $\F(S)$ is locally finite, as every vertex has valence at most $d(S)$. Since the mapping class group $\Mod(S)$ acts on $\F(S)$ by automorphisms and since the quotient is compact, it follows by the \v{S}varc--Milnor Lemma (see, e.g., \cite{BH}) that $\F(S)$ and $\Mod(S)$ are quasi-isometric. This fact has been exploited by Disarlo--Parlier \cite{DP}
to give an elementary proof of a result of Masur--Minsky \cite{MM} that subsurface inclusions induce quasi-isometric embeddings between the corresponding mapping class groups. In a different direction, the flip graph has recently been used by Costantino--Martelli \cite{CM} to construct families of quantum representations of mapping class groups. 

In this paper, we  classify all injective simplicial maps between flip graphs. Before giving a precise statement we need some definitions. Given surfaces $S$ and $S'$, by an {\em embedding} of $S$ into $S'$ we mean a  $\pi_1$--injective continuous map $h:S \to S'$ that maps every marked point on $S$ to a marked point on $S'$. 
%\marginpar{\tiny{I've changed this. I think it was not correct as written before}}

An embedding $h:S\to S'$ induces an injective simplicial map $\phi:\F(S) \to \F(S')$ as follows: we choose a triangulation $A$ of $S' \setminus\text{int}(h(S))$, plus a collection $B$ of arcs on $\partial h(S)$ whose union is homeomorphic to $\partial h(S)$,  and then define $\phi(v) = h(v) \cup A \cup B$ for all $v\in \F(S)$. 

The purpose of this paper is to prove that, provided $S$ is ``complicated enough",  every injective simplicial map $\F(S) \to \F(S')$ arises in this way. More concretely, we will say that the surface $S$ is {\em exceptional} if it is an essential  subsurface of (and possibly equal to)  a torus with at most two marked points, or a sphere with at most four marked points. 
Our main result is:

\begin{theorem} 
Suppose $S$ is non-exceptional. Then every injective simplicial map  $\phi: \F(S) \to \F(S')$ is induced by an embedding $S\to S'$.
\label{main}
\end{theorem}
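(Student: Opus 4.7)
The plan is to reduce Theorem \ref{main} to a rigidity theorem for the arc complex $\A(S)$, whose vertices are isotopy classes of arcs on $S$ and whose simplices are sets of pairwise disjoint arcs. Triangulations are precisely the maximal simplices of $\A(S)$, and classifications of injective simplicial maps between arc complexes have been established in various settings in earlier work. The main task is therefore to promote the flip graph map $\phi$ to an injective simplicial map $\phi_* \co \A(S) \to \A(S')$, and then to invoke existing arc complex rigidity to extract an embedding $h \co S \to S'$ inducing $\phi$.

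I would begin with a local reconstruction of arcs at each triangulation $T \in \F(S)$. The edges of $\F(S)$ incident to $T$ are in natural bijection with the flippable arcs of $T$, since each such edge corresponds to flipping a unique arc. By injectivity of $\phi$, the $\phi$-images of these edges are distinct edges of $\F(S')$ incident to $\phi(T)$, and these images are canonically characterized as precisely those edges at $\phi(T)$ that lie in the image of $\phi$: for if $\phi(T_1)\phi(T_2)$ has $\phi(T)$ as an endpoint, then by injectivity of $\phi$ on vertices one of $T_1,T_2$ must equal $T$. This yields, at each $T$, a canonical injection from the flippable arcs of $T$ into the arcs of $\phi(T)$.

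Next I would glue these local assignments into a globally defined map $\phi_* \co \A(S) \to \A(S')$ by setting $\phi_*(a)$ to be the arc of $\phi(T)$ assigned to $a$ for some $T\ni a$. Independence of $T$ follows from connectivity of the subgraph of triangulations containing $a$ (essentially a flip graph of a simpler surface), together with an analysis of how the local arc labels are transported along a path of flips avoiding $a$. Injectivity of $\phi_*$ descends from that of $\phi$. Two arcs are disjoint if and only if they co-occur in some triangulation, so simpliciality of $\phi$ forces $\phi_*$ to send disjoint arcs to disjoint arcs; hence $\phi_*$ extends to an injective simplicial map of arc complexes sending triangulations to triangulations. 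Arc complex rigidity then produces the desired embedding $h$ inducing $\phi_*$, and by construction the same $h$ induces $\phi$ on flip graphs.

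The main obstacle is the global well-definedness: one must verify that the arc label assigned to $a$ by the local procedure at $T$ agrees with that assigned at a neighboring triangulation $T''\ni a$. When $a$ and the flipped arc $b$ share no triangle, the two flips commute to form a $4$-cycle in $\F(S)$; its $\phi$-image is a $4$-cycle in $\F(S')$ whose opposite edges must flip the same arc of $S'$, yielding agreement of labels. When $a$ and $b$ share a triangle the flips do not commute, and a more delicate path-and-diagram chase through $\F(S)$ is required, exploiting the richer local combinatorics available precisely when $S$ is non-exceptional. The exceptional surfaces are those where this combinatorial reconstruction collapses or where the arc complex rigidity theorem invoked at the last step is known to fail.
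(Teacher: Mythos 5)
Your proposal has a genuine gap at its final step. You plan to promote $\phi$ to an injective simplicial map $\phi_*\co \A(S)\to\A(S')$ and then ``invoke existing arc complex rigidity to extract an embedding $h\co S\to S'$.'' No such off-the-shelf result exists for injective maps between arc complexes of \emph{different} surfaces: Irmak--McCarthy treat injective simplicial self-maps of $\A(S)$, and Disarlo's theorem classifies \emph{isomorphisms} of arc complexes. Indeed, a classification of injections $\A(S)\to\A(S')$ would be a theorem of essentially the same strength as the one you are trying to prove. Worse, when $d(S)<d(S')$ your $\phi_*$ does not even send maximal simplices to maximal simplices: a triangulation $T$ of $S$ has $d(S)$ arcs, while $\phi(T)$ has $d(S')$, and your local assignment only accounts for the arcs of $\phi(T)$ that actually get flipped. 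The $d(S')-d(S)$ arcs of $\phi(T)$ that are never flipped are precisely the content of the paper's Theorem \ref{invariant-arc} (the invariant multiarc $A$ with $A\subset\phi(v)$ for all $v$), whose proof occupies Sections 2--3 and is the real work: it reduces the problem to the equal-complexity case $\F(S)\to\F(S'\setminus A)$, where one must then show $\phi$ is \emph{surjective} (via the lemma that unflippable arcs map to unflippable arcs, hence degrees are preserved) before Disarlo's isomorphism rigidity becomes applicable. None of this reduction appears in your proposal, and without it the last step does not close.

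A secondary but substantial issue is that the well-definedness of your local-to-global gluing --- the case where the arc $a$ and the flipped arc $b$ share a triangle, which you defer to ``a more delicate path-and-diagram chase'' --- is exactly where the paper's technical machinery lives (extendable edges, the classification of squares and pentagons, and the bypassing of cylinder flips in Lemma \ref{lem:cylinder} and Propositions \ref{extend1}--\ref{extend2}). Identifying the difficulty is not the same as resolving it; in particular, the existence of adjacent edges of $\F(S)$ contained in no square or pentagon (cylinder flips, vertices of non-maximal degree) means the naive diagram chase genuinely fails at some vertices and must be routed around, which is why the paper works so hard to produce extendable paths between vertices of maximal degree. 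As written, your argument establishes neither the invariant multiarc nor the coherence of the arc labels, and the appeal to arc complex rigidity at the end is to a theorem that is not available in the generality you need.
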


Note that if $S$ is a cylinder with two boundary components and one vertex on each boundary then $\F(S) \cong \mathbb R$ with its usual simplicial structure, and thus the statement of Theorem \ref{main} is false in this case. If $S$ is a torus with one marked point then $\F(S)$ is an infinite trivalent tree, which we conjecture can be embedded in the flip graph of any surface of genus $\ge 2$ with one marked point in a way that is not induced by an embedding between the corresponding surfaces. While these examples highlight the failure of Theorem \ref{main} for an arbitrary surface $S$, we do not know at this time whether Theorem \ref{main} holds for some of the surfaces excluded in the hypotheses. 

Theorem \ref{main} should be compared with a previous result \cite{Ara} of the first author, which shows the analogous statement for injective maps between pants graphs
of surfaces. While the proofs of both results are similar in spirit, the technicalities are rather different; that said, we suspect that it should be possible to give axiomatic conditions for certain classes of graphs, built from arcs or curves on surfaces, that guarantee that simplicial injections between two such graphs are always induced by embeddings of the underlying surfaces.

As usual, the flip graph becomes a geodesic metric space by declaring the length of each edge to be equal to 1. Combining Theorem \ref{main} with the convexity result of  Disarlo--Parlier\cite{DP}, we obtain the following corollary:

\begin{corollary}
Suppose $S$ is non-exceptional, and let  $\phi: \F(S) \to \F(S')$ be an injective simplicial map. Then $\phi(\F(S))$ is a totally geodesic subset of $\F(S')$; in other words, any geodesic in $\F(S')$ connecting two points of $\phi(\F(S))$ is entirely contained in $\phi(\F(S))$. 
\end{corollary}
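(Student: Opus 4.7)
The plan is to combine Theorem~\ref{main} with the Disarlo--Parlier convexity result in order to describe $\phi(\F(S))$ explicitly as a subset of $\F(S')$ of a form already known to be totally geodesic.

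First, I would apply Theorem~\ref{main} to write $\phi$ as the simplicial map induced by an embedding $h \colon S \to S'$. Unwinding the construction recalled in the introduction, there exist a triangulation $A$ of $S' \setminus \mathrm{int}(h(S))$ together with a collection $B$ of arcs on $\partial h(S)$ whose union is $\partial h(S)$, such that $\phi(v) = h(v) \cup A \cup B$ for every $v \in \F(S)$. Setting $\alpha := A \cup B$, every triangulation in $\phi(\F(S))$ contains the multiarc $\alpha$.

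Next, I would identify $\phi(\F(S))$ with the set $X_\alpha \subset \F(S')$ of all triangulations of $S'$ that contain $\alpha$. The inclusion $\phi(\F(S)) \subset X_\alpha$ is immediate; conversely, any $T \in X_\alpha$ decomposes as $\alpha$ together with a collection of arcs triangulating $\mathrm{int}(h(S))$, and pulling this collection back through $h$ produces a triangulation $v$ of $S$ with $\phi(v) = T$. I would then invoke the Disarlo--Parlier convexity result \cite{DP}, which asserts that the set of triangulations of a surface containing a fixed multiarc is totally geodesic in the ambient flip graph. Applied to $\alpha \subset S'$, this gives that $\phi(\F(S)) = X_\alpha$ is totally geodesic in $\F(S')$.

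The only point requiring a moment's care, rather than a serious obstacle, will be to verify that the simplicial adjacency on $\phi(\F(S))$ inherited from $\F(S')$ coincides with the one pushed forward from $\F(S)$, so that the words \emph{totally geodesic} truly apply to the image of $\phi$ and not merely to a set of vertices. This is immediate: two elements of $X_\alpha$ differ by a single flip in $\F(S')$ precisely when they differ by flipping an arc lying in $\mathrm{int}(h(S))$, and such a flip corresponds under $h$ to a flip in $\F(S)$. With this observation the corollary follows directly from the chain of identifications above.
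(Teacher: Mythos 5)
Your argument is exactly the one the paper intends: Theorem~\ref{main} identifies $\phi(\F(S))$ with the set of triangulations of $S'$ containing the fixed multiarc $A\cup B$, and the Disarlo--Parlier convexity result then gives that this set is totally geodesic. The proposal is correct and matches the paper's (one-line) proof, with the identification $\phi(\F(S)) = \F_{A\cup B}(S')$ spelled out carefully.
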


We now give an idea of the proof of Theorem \ref{main}. The first step will be to show the following:

\begin{theorem}
Suppose $S$ is non-exceptional, and let $\phi: \F(S) \to \F(S')$ be an injective simplicial map. Then:
\begin{enumerate}
\item We have $d(S) \le d(S')$;
\item There exists a multiarc $A \subset S'$, with $d(S') - d(S)$ elements, such that $A \subset \phi(v)$ for all $v \in \F(S)$. 
\end{enumerate} 
\label{invariant-arc}
\end{theorem}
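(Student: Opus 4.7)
The plan is to prove both statements simultaneously by constructing the invariant multiarc $A$ at a convenient ``base'' triangulation and then propagating it throughout $\F(S)$ using the local structure of flips.

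For (1), I will exhibit a triangulation $T_{0} \in \F(S)$ in which every arc is flippable, so that the valence of $T_{0}$ in $\F(S)$ equals $d(S)$. Such a $T_{0}$ can be found when $S$ is non-exceptional: one arranges that the two triangles abutting any given arc of $T_{0}$ are distinct. Since $\phi$ is an injective simplicial map, the images of the $d(S)$ neighbors of $T_{0}$ yield $d(S)$ distinct neighbors of $\phi(T_{0})$ in $\F(S')$; as any vertex of $\F(S')$ has valence at most $d(S')$, the inequality $d(S) \le d(S')$ follows.

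For (2), let $F_{0} \subset \phi(T_{0})$ be the collection of arcs of $\phi(T_{0})$ flipped by the $d(S)$ edges of $\F(S')$ incident to $\phi(T_{0})$ that lie in the image of $\phi$; injectivity gives $|F_{0}| = d(S)$, and I set $A := \phi(T_{0}) \setminus F_{0}$, a multiarc of size $d(S') - d(S)$. For any neighbor $v$ of $T_{0}$, the edge $T_{0}v$ maps to an edge of $\F(S')$ flipping a single arc of $F_{0}$, so every arc of $A$ remains in $\phi(v)$. I then propagate this by induction on the distance from $T_{0}$ in $\F(S)$: assuming $A \subset \phi(T)$ for all $T$ at distance at most $n$, let $T'$ be at distance $n+1$, adjacent to $T$ at distance $n$, and let $\alpha \in \phi(T)$ be the arc flipped along $\phi(T)\phi(T')$; the goal is to show $\alpha \notin A$, whence $A \subset \phi(T')$.

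The key tool for the inductive step is the commuting-square structure of flip graphs. I pick a neighbor $T''$ of $T$ at distance $n-1$ from $T_{0}$ such that the flip $T''T$ commutes with the flip $TT'$ in $\F(S)$; the resulting $4$-cycle $T'', T, T', T^{\ast}$ in $\F(S)$ maps under $\phi$ to a $4$-cycle in $\F(S')$. Since every $4$-cycle in a flip graph arises from a commuting pair of flips, the arc $\alpha$ flipped along $\phi(T)\phi(T')$ coincides with the arc flipped along $\phi(T'')\phi(T^{\ast})$. As this edge is the $\phi$-image of $T''T^{\ast}$, and the inductive hypothesis applies at both $T''$ and $T^{\ast}$ (each at distance at most $n$ from $T_{0}$), the flipped arc lies in $\phi(T'') \setminus \phi(T^{\ast})$; in particular it is not in $A$, as desired.

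The main obstacle is the case where the relevant flips in $\F(S)$ do not commute, namely when the two flipped arcs share a triangle. The minimal non-trivial relation in $\F(S)$ is then a pentagon, and I must either analyze how $\phi$ acts on $5$-cycles or reroute the path from $T_{0}$ to $T'$ through a detour consisting entirely of commuting squares. Establishing that such a detour always exists relies on the rich local connectivity of $\F(S)$ afforded by the non-exceptional hypothesis, and navigating this combinatorial analysis is the technical heart of the argument.
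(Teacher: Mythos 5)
Your overall strategy coincides with the paper's: part (1) is proved identically (evaluate $\phi$ at a vertex of maximal valence $d(S)$ and use injectivity), the multiarc $A$ is defined at that vertex in exactly the same way (the arcs not flipped by any of its $d(S)$ neighbors), and the propagation of $A\subset\phi(\cdot)$ rests on the same rigidity fact, namely that squares and pentagons in a flip graph have a unique combinatorial form, so the image of such a cycle pins down the common intersection of its vertices to exactly $d(S')-2$ arcs. The paper organizes the propagation through the notion of an extendable edge and Propositions \ref{extend1} and \ref{extend2} rather than an induction on distance from the base vertex, but that is a difference of bookkeeping, not of idea.

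The genuine gap is in your inductive step, and it is precisely what Section 2 of the paper is devoted to. Your dichotomy --- either the flip $T''T$ commutes with $TT'$ (square) or the two flipped arcs share a triangle (pentagon) --- is incomplete. There is a third case: the two flips may be supported in a common cylinder (the quadrilateral supporting the second flip has a repeated side, the case $b=c$ in Lemma \ref{lem:degree}), and then the three triangulations lie in \emph{no} square or pentagon of $\F(S)$. Bypassing such cylinder flips is the content of Lemma \ref{lem:cylinder}, whose proof is a several-case analysis on hexagons and is where non-exceptionality of $S$ is actually used; it cannot be subsumed under ``rich local connectivity.'' A second omission: when a path passes through a triangulation containing an unflippable arc (a loop bounding a once-punctured disk), the relevant quadrilateral degenerates and the pentagon completion fails; this is why Lemma \ref{lem:degree} carries the hypothesis $\deg(u)\ge\deg(v)$ and why the paper needs the separate Proposition \ref{extend2}. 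Finally, your induction is on distance from $T_{0}$, but the detours required to avoid these bad configurations do not respect distance (the rerouted paths of Proposition \ref{extend1} are in general longer than the originals), so even granting the detours exist, the induction as you have set it up would not close; the paper instead inducts along an extendable path chosen in advance.
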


In other words, $\phi(\F(S)) \subset \F_A(S')$, where $\F_A(S')$ denotes the subgraph of $\F(S')$ spanned by those triangulations of $S'$ that contain $A$. Observe that there is a natural isomorphism $\F_A(S') \cong \F(S' \setminus A)$, where $S'\setminus A$ is the result of cutting $S'$ open along  every element of $A$, and
%first cutting $S'$ open along every element of $A$ and then discarding those connected components that are homeomorphic to disks. 
thus we can view the map $\phi$ as a simplicial injection $ \F(S) \to \F(S' \setminus A)$. Noting that $d(S) = d(S'\setminus A)$,
%We will then prove exactly one that $S'-A$ has exactly one connected component $\Sigma$ that is not a triangle, so in turn we may regard $\phi$ as an injective map $\F(S) \to F(\Sigma)$.
Theorem \ref{main}  will follow from:

\begin{theorem}
Let $S$ and $S'$ be connected surfaces, with $d(S) =d(S')$ and $S$ non-exceptional.  Then every injective simplicial map $\F(S) \to \F(S')$ is induced by a homeomorphism $S \to S'$.
\label{isom}
\end{theorem}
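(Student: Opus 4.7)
The plan is to extract from $\phi$ a simplicial isomorphism $\Phi\co\A(S)\to\A(S')$ between arc complexes that is compatible with the flip operation, and then to apply the rigidity of the arc complex (in the spirit of Irmak--McCarthy and Disarlo) to produce a homeomorphism $S\to S'$ inducing $\phi$.

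\textbf{Step 1 (local correspondence on flippable arcs).} For each triangulation $T$ of $S$, the edges of $\F(S)$ incident to $T$ are in natural bijection with the flippable arcs of $T$, since each such arc $a$ determines a unique flipped triangulation $T_a$. Injectivity of $\phi$ therefore induces an injective map $\phi_T$ from flippable arcs of $T$ to flippable arcs of $\phi(T)$.

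\textbf{Step 2 (commutation squares).} A standard fact, which I would reprove by a direct local analysis, is that every $4$-cycle in $\F(S)$ is a \emph{commutation square} of the form $T\to T_a\to T_{ab}\to T_b\to T$, associated to a pair $a,b$ of flippable arcs in $T$ sharing no triangle. Since $\phi$ carries $4$-cycles to $4$-cycles in $\F(S')$, this yields $\phi_T(a)=\phi_{T_b}(a)$ whenever $a$ and $b$ commute in $T$.

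\textbf{Step 3 (global consistency).} For any arc $a$ of $S$, the subgraph $\F_a(S)\subset\F(S)$ of triangulations containing $a$ is canonically isomorphic to $\F(S\setminus a)$, and is therefore connected when $S$ is non-exceptional. I would then show that any two triangulations $T,T'\in\F_a(S)$ can be connected by a path in $\F_a(S)$ all of whose flips involve arcs that share no triangle with $a$; combined with Step~2, this gives $\phi_T(a)=\phi_{T'}(a)$, and hence the assignment $a\mapsto\phi_T(a)$ descends to a well-defined map $\Phi$ on the set of flippable arcs of $S$. Since $S$ is non-exceptional, every arc of $S$ is flippable in some triangulation, so $\Phi$ extends to all of $\A(S)$.

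\textbf{Step 4 (from arc complex isomorphism to homeomorphism).} By construction $\Phi$ sends each triangulation $T$ bijectively onto the arc set of $\phi(T)$, so $\Phi$ is a simplicial injection between arc complexes. The hypothesis $d(S)=d(S')$ forces $\Phi$ to be a bijection on top-dimensional simplices, and hence a simplicial isomorphism. Applying rigidity of the arc complex now produces a homeomorphism $h\co S\to S'$ with $\Phi=h_*$, and one checks that $h$ induces $\phi$ since $\phi$ is reconstructed from $\Phi$ on triangulations.

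The main obstacle is the re-routing argument in Step~3. Commutation squares only transport the identification of $\phi_T(a)$ across flips of arcs disjoint from a neighborhood of $a$, while navigating $\F_a(S)$ may naturally require flipping arcs that share a triangle with $a$. The plan here is either to show that such flips can always be avoided (via a local triangulation surgery replacing a problematic flip by a sequence of commuting ones), or, when they cannot, to invoke the pentagon relation in the flip graph---a $5$-cycle that $\phi$ must send to a $5$-cycle in $\F(S')$, yielding the required identification. The non-exceptional hypothesis is expected to enter precisely at this point, as the exceptional surfaces are too small to admit the local room needed for this surgery.
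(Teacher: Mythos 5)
Your overall architecture (flip graph $\Rightarrow$ induced map on arc graphs $\Rightarrow$ arc-complex rigidity) matches the paper's, but Step~4 contains a genuine gap. The assertion that ``the hypothesis $d(S)=d(S')$ forces $\Phi$ to be a bijection on top-dimensional simplices, and hence a simplicial isomorphism'' does not follow from what precedes it: the dimension count only shows that $\Phi$ carries each triangulation of $S$ to a triangulation of $S'$ (top simplex to top simplex). It gives neither surjectivity (every triangulation of $S'$ could fail to be hit -- a priori $\phi(\F(S))$ is a proper subgraph of $\F(S')$) nor injectivity of $\Phi$ on pairs of arcs with $i(a,b)\neq 0$ (two intersecting arcs never lie in a common triangulation, so nothing local forces their images apart). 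The paper closes exactly this gap with a lemma your plan never touches: if $a$ is unflippable in $v$ then $\psi(a)$ is unflippable in $\phi(v)$ (Lemma~\ref{nonflip}), whence $\deg(\phi(v))=\deg(v)$ for every $v$ (Corollary~\ref{degree}); surjectivity of $\phi$ then follows from connectivity of $\F(S')$, and injectivity of $\psi$ on intersecting arcs follows from $\phi(\F_a(S))=\F_{\psi(a)}(S')$. Some such degree-preservation argument is unavoidable, because injectivity of $\phi$ alone only yields $\deg(\phi(v))\ge\deg(v)$.

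Your Steps~1--3 take a genuinely different route from the paper to define the arc map, namely the local-transport strategy of Korkmaz--Papadopoulos, and you correctly identify its weak point yourself: well-definedness of $a\mapsto\phi_T(a)$ across $\F_a(S)$ requires handling flips that share a triangle with $a$, and not every pair of adjacent edges of $\F(S)$ extends to a square or pentagon (cylinder flips are the obstruction), so neither of your two proposed fixes is automatic. The paper sidesteps transport entirely: it defines $\psi(a)$ as the unique arc $b$ with $\phi(\F_a(S))\subset\F_b(S')$, obtained by applying the invariant-multiarc theorem (Theorem~\ref{invariant-arc}) to the restriction $\phi\co\F_a(S)\cong\F(S\setminus a)\to\F(S')$, so well-definedness is built in and all the square/pentagon/cylinder combinatorics is quarantined in the proof of that theorem. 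As written, your proposal is a plausible plan whose two hardest steps (the re-routing in Step~3 and the isomorphism claim in Step~4) are precisely the ones left unproved.
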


\begin{remark} Theorem \ref{main} was previously shown by Korkmaz-Papadopoulos \cite{KP} in the special case when $S = S'$, $\partial S =\emptyset$, and the map $\F(S)\to \F(S')$ an automorphism.
\end{remark}

\medskip

\noindent{\bf Acknowledgements.} The authors would like to thank the Institute for Mathematical Sciences of Singapore,  the Mathematisches Forschungsinstitut Oberwolfach, the Technion, and the Universidad de Zaragoza, where parts of this work were completed.  We would like to thank the referee for his/her comments.

The first named author was supported by BQR and Campus Iberus grants. The second named author was partially supported by NSF grant DMS-1203964. The third author is supported by Swiss National Science Foundation grant PP00P2-128557.

%%%%%%%%%%%%%%%%%%%%%%%%%%%%%%%%%%%%%%%%%%%%%%%%%%%%%%%%%%%%%%%%%%%%%%%%%%%%

\section{Paths in $\F(S)$} %\marginpar{\tiny{This section now contains the technical results needed for the proof of 1.3}} 
Similarly to the case of pants graphs \cite{Ara}, a large part of our arguments boil down to understanding when it is possible to extend a pair of adjacent edges in $\F(S)$ to a
{\em square} or a {\em pentagon}; see below for definitions. As it turns out, this issue is significantly more subtle here than for pants graphs, due to the fact that there are vertices in $\F(S)$ with non-isomorphic links. The purpose of this section is to prove a series of technical results that will overcome these difficulties. We begin with some definitions.

\subsection{Flippable vs. unflippable arcs} Let $v\in \F(S)$ be a triangulation, and let $a\subset S$ be an arc such that $a\in v$. We  will say that $a$ is {\em flippable with respect to $v$} if  there exists a triangulation $v' \in \F(S)$ that is adjacent to $v'$ in $\F(S)$ and satisfies $v \setminus (v \cap v') =a$.   In other words, the edges in the triangulations $v$ and $v'$ of $S$ differ only by the arc $a$.  We will denote the flip from $v$ to $v'$ by $a \to a'$. Observe that  $a\in v$ is unflippable if and only if, up to a homeomorphism of $S$, $v$ contains the arcs in Figure \ref{f:nonflip}; furthermore, if $v'$ is any other triangulation containing those arcs, then $a$ is unflippable with respect to $v'$.

\begin{figure}[h]
%\ShowGrid
\leavevmode \SetLabels
\L(.44*.99) $b$\\
\L(.53*.54) $a$\\
%\L(.65*.57) $a$\\
\endSetLabels
\begin{center}
\AffixLabels{\centerline{\epsfig{file =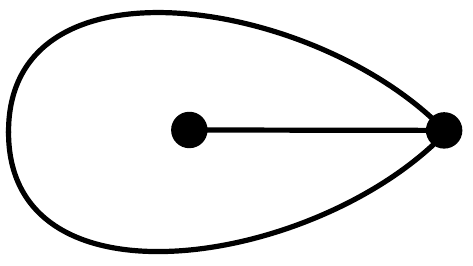,width=4.0cm,angle=0} }}
\vspace{-10pt}
\end{center}
\caption{An unflippable arc of a triangulation} \label{f:nonflip}
\end{figure}

Given a vertex $v\in \F(S)$, denote by $\deg(v)$ the {\em valence} of $v$ in $\F(S)$; that is, the number of vertices of $\F(S)$ that are adjacent to $v$.  Observe that $\deg(v) \le d(S)$ for every vertex $v\in \F(S)$, and that there exists a vertex $u\in \F(S)$ for which $\deg(u) = d(S)$; indeed, in the light of the previous paragraph it suffices to consider a triangulation that contains no arcs bounding a once-punctured disk.

\subsection{Squares and pentagons} A {\em square} (resp. a {\em pentagon}) in $\F(S)$ is a closed path with four  (resp.  five) vertices. Korkmaz-Papadopoulos \cite{KP} have
shown that every square and pentagon in $\F(S)$ is of the form of the one described in Figures \ref{f:square} and \ref{f:pentagon}; see Lemmas 2.2 and 2.3 of \cite{KP}.  In particular we have the following observation, which we state as a separate lemma.

\begin{lemma}
Let $\sigma$ be a square or pentagon in $\F(S)$. Then $\bigcap_{v\in \sigma} v$ consists of exactly $d(S) - 2$ curves. 
\label{shallow}
\end{lemma}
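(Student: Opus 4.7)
The plan is to reduce the lemma directly to the classification of squares and pentagons in $\F(S)$ established by Korkmaz--Papadopoulos (Lemmas 2.2 and 2.3 of \cite{KP}), which is quoted in the paragraph just before the statement. Once we know the precise combinatorial shape of every square and every pentagon, as depicted in Figures \ref{f:square} and \ref{f:pentagon}, the lemma becomes a bookkeeping exercise: we count the arcs that appear in all vertices of $\sigma$ and check that exactly $d(S)-2$ of them do.

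For the square case, the classification says that a square in $\F(S)$ arises from a triangulation $v$ together with two disjoint flippable arcs $a,b\in v$ whose supporting flip quadrilaterals are disjoint. The four vertices of the square are then
\[
v, \quad v\setminus\{a\}\cup\{a'\}, \quad v\setminus\{b\}\cup\{b'\}, \quad v\setminus\{a,b\}\cup\{a',b'\},
\]
so every vertex of the square contains $v\setminus\{a,b\}$, giving at least $d(S)-2$ common arcs. No further arc can be common, because $a$ is absent from the two vertices in which it has been flipped (and likewise for $b$). Hence $\bigcap_{w\in\sigma} w = v\setminus\{a,b\}$, which has cardinality $d(S)-2$.

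For the pentagon case, the classification says that the five vertices of $\sigma$ agree outside a pentagonal subsurface $P\subset S$, and that inside $P$ they realize the five distinct triangulations obtained by cycling the two diagonals of a pentagon. Consequently, the arcs of $v$ lying outside $P$ contribute exactly $d(S)-2$ common arcs. On the other hand, each of the five diagonals of $P$ appears in only two consecutive triangulations of the pentagon cycle and is therefore missing from at least one vertex of $\sigma$, so no additional arc belongs to the intersection. This gives $|\bigcap_{w\in\sigma} w|=d(S)-2$, as required.

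I do not expect any real obstacle here; the only point requiring care is the second half of each case, namely verifying that no ``accidental'' arc of the flip region survives in all vertices of $\sigma$. This is immediate from the pictures in Figures \ref{f:square} and \ref{f:pentagon}, together with the fact that every arc which is flipped at some stage of the cycle is removed from at least one of its vertices.
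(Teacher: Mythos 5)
Your proposal is correct and matches the paper's approach: the paper also treats this lemma as an immediate consequence of the Korkmaz--Papadopoulos classification of squares and pentagons (Lemmas 2.2 and 2.3 of \cite{KP}), stating it as an observation read off from Figures \ref{f:square} and \ref{f:pentagon}. You simply make explicit the counting that the paper leaves to inspection.
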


\begin{figure}[h]
%\ShowGrid
\leavevmode \SetLabels
%\L(.65*.85) $\gamma$\\
%\L(.474*.61) $\tiny{c}$\\
%\L(.65*.57) $a$\\
\endSetLabels
\begin{center}
\AffixLabels{\centerline{\epsfig{file =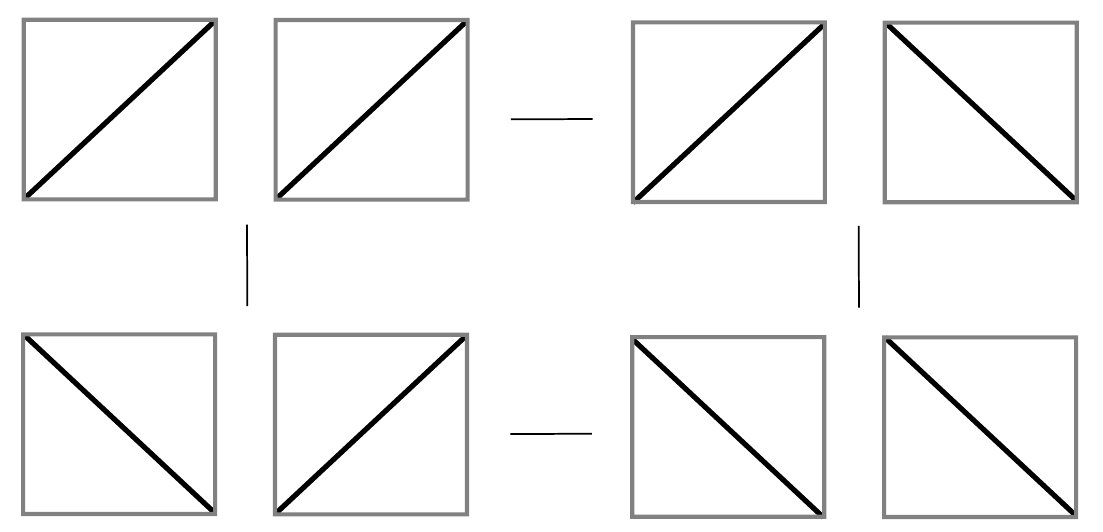,width=6.0cm,angle=0} }}
\vspace{-10pt}
\end{center}
\caption{Schematics of a square in the flip graph} \label{f:square}
\end{figure}

\begin{figure}[h]
%\ShowGrid
\leavevmode \SetLabels
%\L(.65*.85) $\gamma$\\
%\L(.474*.61) $\tiny{c}$\\
%\L(.65*.57) $a$\\
\endSetLabels
\begin{center}
\AffixLabels{\centerline{\epsfig{file =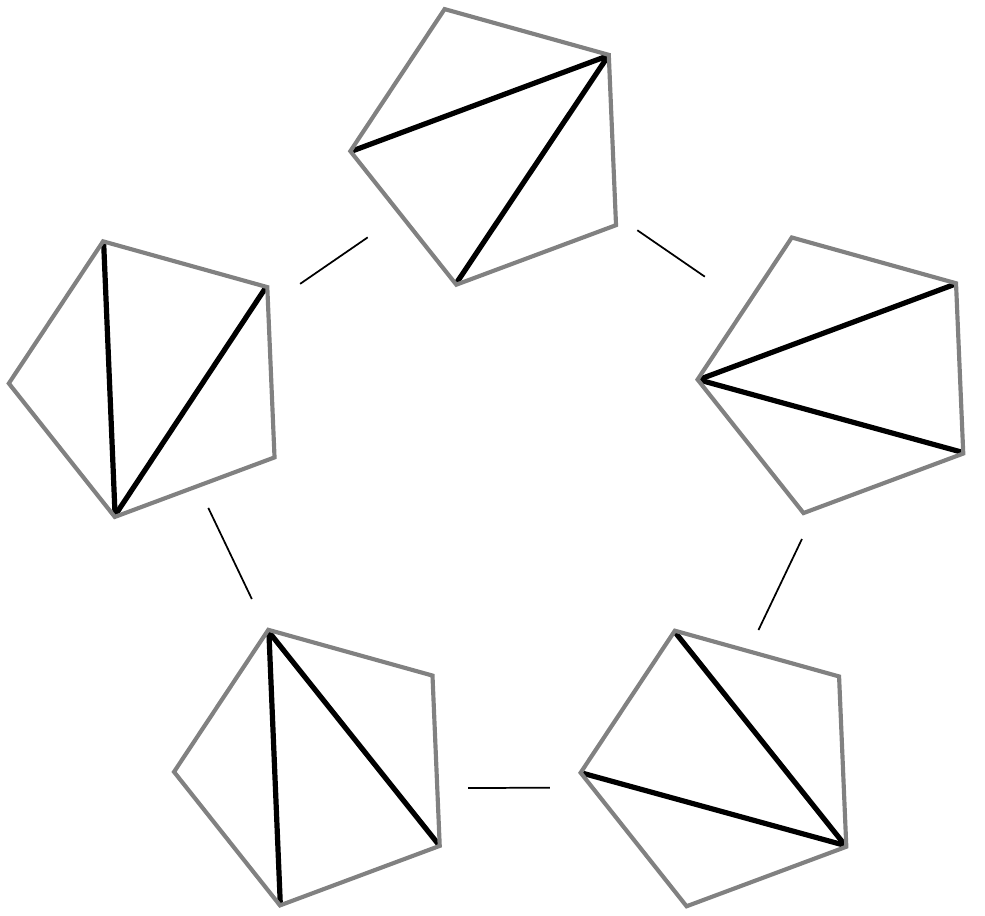,width=6.0cm,angle=0} }}
\vspace{-10pt}
\end{center}
\caption{Schematics of a pentagon in the flip graph} \label{f:pentagon}
\end{figure}

As mentioned above, one of the main difficulties stems from the fact that there exist pairs of adjacent edges of $\F(S)$ that are not contained in a square or a pentagon. In light of this, we  introduce the notion of an {\em extendable} edge: we will say that the (oriented) edge $(u,v)$ of $\F(S)$ is  extendable if for all $w \in \link(v) \setminus \{u\}$, there is a square or a pentagon in $\F(S)$ that contains $\{u,v,w\}$; here, $\link(v)$ denotes the link of the vertex $v$ in $\F(S)$, i.e. the set of vertices adjacent to $v$.  A path is called \emph{extendable} if it consists of extendable edges.

The next two propositions will be crucial for the proof of Theorem \ref{invariant-arc}, as they guarantee that there are enough extendable paths in $\F(S)$.
\begin{proposition}\label{extend1}
Every pair of vertices $u, v\in \F(S)$ with 
$$\deg(u)=\deg(v)=d(S)$$
may be joined by a path $u=u_0,u_1, \ldots, u_n=v$ such that  $(u_i,u_{i+1})$ is extendable for all $0\le i< n$.
\end{proposition}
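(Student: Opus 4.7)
The plan is to combine a combinatorial criterion for extendability with a connectivity argument restricted to the set $V_{\max}=\{w\in\F(S):\deg(w)=d(S)\}$ of triangulations with no unflippable arc. By the description in Figure \ref{f:nonflip}, an arc is unflippable exactly when it is one of the two sides of a once-punctured monogon, so $V_{\max}$ consists precisely of those triangulations containing no once-punctured monogon, and is non-empty whenever $S$ is non-exceptional.

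First, I would exploit the Korkmaz--Papadopoulos classification (Figures \ref{f:square} and \ref{f:pentagon}, together with Lemma \ref{shallow}) to describe extendability combinatorially. Given an edge $(u,v)$ obtained by a flip $a\to a'$ and a vertex $w\in\link(v)\setminus\{u\}$ obtained from $v$ by a flip $b\to b'$ with $b\ne a'$, the triple $\{u,v,w\}$ extends to a square in $\F(S)$ exactly when the arcs $a'$ and $b$ are disjoint, and to a pentagon exactly when $a'$ and $b$ share a triangle $T\subset v$ whose union with the two neighbouring triangles across $a'$ and $b$ is an embedded disk with five distinct boundary vertices. Using this, I would prove that every edge $(u,v)$ with $\deg(u)=\deg(v)=d(S)$ is extendable. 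In the disjoint case the two flips commute, and the missing fourth vertex of the square is produced by flipping $b$ in $u$, which is possible since $\deg(u)=d(S)$. In the shared-triangle case, the assumption $\deg(v)=d(S)$ forces every arc on the boundary of the ``pentagon subsurface'' to be flippable, hence no such arc bounds a once-punctured monogon; a short topological case analysis, using the non-exceptionality of $S$, then rules out the remaining identifications and shows that the pentagon subsurface is an embedded disk, so that a pentagon extension exists.

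It then remains to show that any two vertices $u,v\in V_{\max}$ may be joined by a path lying entirely in $V_{\max}$. Starting from any path from $u$ to $v$ in $\F(S)$ (which exists by the connectivity of the flip graph), I would induct on the number of intermediate vertices outside $V_{\max}$. Each such ``bad'' vertex $u_i$ is produced by a flip that creates a new once-punctured monogon, and such a flip can always be rerouted by a short local detour: one first performs a preparatory flip that separates the offending puncture from the triangle in question, after which the original flip no longer creates a monogon. Combined with the preceding extendability statement, this yields the desired path with all edges extendable.

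The main obstacle will be the shared-triangle case of the extendability step, since a priori the pentagon subsurface can fail to be an embedded disk through identifications more subtle than a once-punctured monogon, such as two of its boundary arcs being identified when $S$ has positive genus. The crucial point will be a local topological analysis showing that any such identification would force some boundary arc of the pentagon subsurface to be unflippable in $v$, contradicting $\deg(v)=d(S)$; this is where the non-exceptionality hypothesis is essential, since on the excluded surfaces such identifications can genuinely occur at full-valence vertices.
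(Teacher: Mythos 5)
There is a genuine gap in your first step: the claim that every edge $(u,v)$ with $\deg(u)=\deg(v)=d(S)$ is extendable is false, and the ``crucial point'' you flag at the end --- that any identification among the sides of the pentagon subsurface would force an unflippable arc in $v$ --- is exactly where the argument breaks. In the shared-triangle case, let the quadrilateral supporting the flip $a\to a'$ have sides $c,d,e,b$ as in Figure \ref{f:quadins}. The identification $b=c$ turns this quadrilateral into an essential cylinder (an annulus with no interior marked points, bounded by the two arcs $d$ and $e$), and the flip $b\to b'$ continuing around that cylinder cannot be completed with $(u,v)$ to a square or a pentagon: by the Korkmaz--Papadopoulos classification no polygon of either type contains two flips supported in a common cylinder. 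Crucially, this configuration creates no once-punctured monogon and hence no unflippable arc, so both endpoints of a cylinder-flip edge can have valence $d(S)$; such configurations occur on non-exceptional surfaces (this is precisely why the paper devotes Lemma \ref{lem:cylinder} to them --- that lemma would be vacuous if your claim held). So extendability genuinely fails for some edges joining maximal-degree vertices, and no local analysis of unflippability will rescue it.

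The correct strategy is not to prove such edges are extendable but to avoid them: Lemma \ref{lem:degree} isolates the cylinder flip as the unique obstruction when $\deg(u)\ge\deg(v)$, and Lemma \ref{lem:cylinder} replaces any cylinder flip between equal-degree vertices by a path of non-cylinder flips through vertices of the same degree, obtained by flipping around an adjacent pentagon or hexagon. Your second step --- rerouting an arbitrary path so that all its vertices have maximal degree --- matches the strategy of the paper's proof of the proposition, though your one-sentence description of the local detour (``a preparatory flip that separates the offending puncture'') understates what is needed: the paper takes a shortest path, locates the first vertex containing the offending loop, and performs an inductive surgery (Figures \ref{f:figurebigi}--\ref{f:figurebigiii}) that postpones the reappearance of that loop, iterating over the finitely many unflippable arcs arising along the path. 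That part is repairable; the extendability claim as you state it is not.
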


\begin{proposition}\label{extend2}
Let $u, v\in \F(S)$ be adjacent vertices with $\deg(u)>\deg(v)$. For every $w \in \link(v) \setminus \{u\}$ either $u,v,w$ belong to a common square or there is an extendable path between $u$ and $w$.
\end{proposition}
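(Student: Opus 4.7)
The plan is to analyse the pair of flips emanating from $v$: let $a \in v \setminus u$ and $b \in v \setminus w$, with replacements $a'$ and $b'$, so $u = (v \setminus \{a\}) \cup \{a'\}$ and $w = (v \setminus \{b\}) \cup \{b'\}$. I split according to the relative position of $a$ and $b$ in $v$. If $a$ and $b$ do not share a triangle of $v$, the two flips commute and $v, u, w, (v \setminus \{a,b\}) \cup \{a', b'\}$ form a square, realising the first alternative in the statement. If $a$ and $b$ share a triangle, I next attempt to close up into the Korkmaz--Papadopoulos pentagon of Figure \ref{f:pentagon}; when the local topology is generic this succeeds, and the triple $\{u,v,w\}$ sits inside a pentagon.

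The heart of the argument is the remaining case in which $a$ and $b$ share a triangle but no pentagon is realised, which happens when some of the arcs required to build a pentagon would coincide with existing arcs of $v$, cobound a once-punctured monogon, or lie against $\partial S$ in a constrained way. Here I exploit the hypothesis $\deg(u) > \deg(v)$: since $u$ and $v$ differ only by the single flip $a \to a'$, strict inequality of valences forces the existence of an arc $c \in u \cap v$ that is unflippable in $v$ but flippable in $u$. By the classification of unflippable arcs encoded in Figure \ref{f:nonflip}, such a $c$ must cobound a once-punctured monogon with $a$ in $v$, and that monogon is destroyed by the flip $a \to a'$.

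Using this extra flippable arc, I intend to build an explicit detour
\[
u = u_0, u_1, \ldots, u_n = w
\]
of uniformly bounded length (I expect $n \leq 4$) whose first edge is the flip of $c$. The point is that after flipping $c$, the monogon obstruction that prevented the pentagon from closing has been removed, so the remaining edges can be chosen as local moves near $b$ that each fit into a square or a pentagon. To verify extendability of each $(u_i, u_{i+1})$ I will check case by case that the local triangulation around the flipped arc is generic, so that for every $x \in \link(u_{i+1}) \setminus \{u_i\}$ the triple $\{u_i, u_{i+1}, x\}$ genuinely sits in a square or a pentagon of $\F(S)$.

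The main obstacle will be the degenerate case: enumerating the finitely many local pictures in which neither a square nor a pentagon through $\{u,v,w\}$ exists, confirming in each one that the extra flippable arc $c$ supplied by $\deg(u) > \deg(v)$ is positioned so that flipping it genuinely unblocks the configuration around $b$, and then verifying that every edge of the resulting detour is extendable. This is precisely where the combinatorial subtleties flagged in the section introduction — the fact that distinct vertices of $\F(S)$ may have non-isomorphic links — become most delicate, and I expect this step to dominate both the length and the technical difficulty of the argument.
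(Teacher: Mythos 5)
Your reduction to three cases (commuting flips give a square; a shared triangle usually gives a pentagon; a degenerate residual case) is the right skeleton, and your identification of the source of the degree drop is correct: $\deg(u)>\deg(v)$ forces the arc $a$ removed from $v$ to be a loop bounding a once-punctured monogon whose inner arc $c$ is unflippable in $v$ and flippable in $u$. But the residual case --- which is the entire content of the proposition --- is not proved: the detour is only announced (``I intend to build'', ``I expect $n\le 4$'', ``I will check case by case''), and neither the path nor the extendability verification is supplied. Worse, the one concrete instruction you do give, namely to begin the detour by flipping $c$ in $u$, is counterproductive. In $u$ the only arcs incident to the puncture enclosed by the old monogon are $c$ and the new arc $a'$, so both triangles of $u$ adjacent to $c$ have $a'$ as a side; flipping $c$ therefore produces a new loop (based at the far endpoint of $a'$) around the same puncture, rendering $a'$ unflippable. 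Thus your first edge strictly decreases the degree and creates a fresh monogon obstruction --- it is exactly an instance of the kind of edge whose extendability Proposition \ref{extend2} is trying to establish --- and your stated rationale (``the monogon obstruction \dots has been removed'') is backwards, since there is no monogon in $u$ to remove.

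The second gap is structural: you have no mechanism for certifying that the edges of your detour are extendable. Extendability of $(u_i,u_{i+1})$ is a condition on the \emph{entire} link of $u_{i+1}$, and the paper discharges it not by inspecting local pictures one neighbour at a time but by keeping every intermediate vertex of the detour at degree $\deg(u)$, so that Lemma \ref{lem:degree} applies to each consecutive pair of flips, and by bypassing the cylinder flips that Lemma \ref{lem:degree} cannot handle (Lemma \ref{lem:cylinder}). Concretely, the paper's path from $u$ to $w$ (Figure \ref{f:figurebigiii}) avoids reintroducing the loop until the very last flip; your plan, by contrast, immediately leaves the set of degree-$\deg(u)$ vertices. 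To repair the argument you would need either to re-derive a degree-monotonicity criterion of this kind or to replace your detour by one that never creates an unflippable arc before its terminal vertex.
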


The proof of the above propositions in somewhat involved and will be broken down into a series of intermediate lemmas. We need some notation before commencing. By a {\em triangle} on $S$ we mean a simply connected region bounded by three arcs on $S$. We define {\em quadrilaterals}, {\em pentagons} and {\em hexagons} in a similar fashion. Finally, by a {\em cylinder} we mean an essential subsurface of $S$ that is homeomorphic to $\mathbb S^1 \times [0,1]$, without any interior marked points, and whose boundary components are two distinct arcs on $S$.

\begin{lemma}\label{lem:degree}
Let $(u,v)$ and $(v,w)$ be adjacent edges in $\F(S)$ with 
$$
\deg(u) \geq \deg(v).
$$
Then either $(u,v)$ and $(v,w)$ are contained in a common square or a pentagon, or the two corresponding flips from $u$ to $v$ and from $v$ to $w$ are supported inside a common cylinder.
\end{lemma}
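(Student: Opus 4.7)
The plan is to carry out a case analysis on which triangles of $v$ the two flips touch. Write the first flip as $a \to a'$ and the second as $b \to b'$, and let $T_1, T_2$ (resp.\ $T_3, T_4$) be the two triangles of $v$ adjacent to $a'$ (resp.\ to $b$). Since $a'$ and $b$ are flippable in $v$, neither is in the configuration of Figure~\ref{f:nonflip}, so $T_1 \neq T_2$ and $T_3 \neq T_4$. The natural trichotomy is on the cardinality of $\{T_1,T_2\} \cap \{T_3,T_4\}$.

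If this intersection is empty, the two flips are supported on disjoint 2-triangle regions, hence commute: $b$ has the same adjacent triangles in $u$ as in $v$, so it is flippable in $u$, and flipping $b$ first and then $a$ produces a fourth triangulation $u'$ closing the square $u,v,w,u'$. If the intersection is a single triangle $T_1 = T_3$, then $T_1 \cup T_2 \cup T_4$ is combinatorially a pentagon; provided this glues to an embedded topological pentagon in $S$ (its five boundary arcs being distinct), the classical associahedron relation produces the required pentagon in $\F(S)$ through $(u,v)$ and $(v,w)$.

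The remaining situations are where either $\{T_1,T_2\} = \{T_3,T_4\}$ (so both flips share the same 2-triangle region $R$), or else the abstract pentagon above has non-trivial boundary identifications. In the first, $T_1,T_2$ share both $a'$ and $b$, and one analyzes how the apex vertex where $a'$ and $b$ meet inside $T_1$ (resp.\ $T_2$) maps into $S$. The generic possibility is that the two apices are distinct marked points, forcing $R$ to be an annulus with boundary circles loops based at those two marked points and both flips supported inside; this is precisely the desired cylinder conclusion. The alternatives either place $S$ in the exceptional family of spheres with at most four marked points (handled directly), or make $R$ a ``bigon with an interior puncture.'' The latter case is ruled out by the degree hypothesis: an explicit computation shows that, in the bigon configuration, flipping $a' \to a$ encloses $b$ inside a Figure~\ref{f:nonflip} monogon, so that $b$ becomes unflippable in $u$; hence $\deg(u) < \deg(v)$, contradicting the hypothesis. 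An analogous degeneracy analysis handles the identified-pentagon situation, producing either a cylinder (when identifications turn the pentagonal region into an annulus) or the same valence-drop contradiction.

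The main obstacle I expect is the systematic topological enumeration of how the abstract region around the two flipped arcs can be identified when mapped to $S$, and verifying in each degenerate configuration that the result is either a cylinder or produces a new Figure~\ref{f:nonflip} obstruction that strictly lowers the valence. The hypothesis $\deg(u) \ge \deg(v)$ is used essentially in this step: it is precisely the tool that eliminates every degenerate identification except the cylindrical one.
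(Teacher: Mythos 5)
Your proposal is correct and follows essentially the same route as the paper: a case analysis on how many triangles of $v$ the two flipped arcs share, yielding a commuting square when they share none, a pentagonal region when they share one, and — when they share both — the cylinder, with the hypothesis $\deg(u)\geq\deg(v)$ used exactly as in the paper to exclude the degenerate once-punctured-disk (bigon) configuration. The paper's treatment of the boundary identifications is terser than your enumeration, but the underlying argument is the same.
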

\begin{proof}
Denote by  $a \to a'$ and $b \to b'$ the flips from $u$ to $v$ and from $v$ to $w$, respectively, and observe that $b\ne a'$. If $a'$ and $b$ belong to two different triangles of $v$ then  $u,v,w$ are contained in a square in $\F(S)$, which has $v':= (u \setminus b) \cup b'$ has remaining vertex. 

Suppose now that $a'$ and $b$ belong to the same triangle of $v$. In this case $b$ is contained in a quadrilateral containing $a$ and $a'$. We denote the remaining sides of this square by $c,d,e$ as in  Figure \ref{f:quadins}. Note that $\deg(u)\ge \deg(v)$ implies that the arc $a$ does not bound a once-punctured disk; otherwise an unflippable arc in $u$ would become flippable in $v$. In turn, this yields that $c\ne d$ and $b \ne e$. 

If $b\ne c$ then there is a second triangle of $v$ to which $b$ belongs. The union of this triangle with the former quadrilateral is a pentagon, and hence one can extend the edges $(u,v)$ and $(v,w)$ to a pentagon in $\F(S)$ as in Figure \ref{f:figure2}. If, on the other hand, $b=c$, then we see that $u,v,w$ are connected by two flips that are supported in a common cylinder. This completes the proof of the lemma.
\end{proof}

\begin{figure}[h]
%\ShowGrid
\leavevmode \SetLabels
\L(.61*.5) $b$\\
\L(.5*.95) $c$\\
\L(.37*.5) $d$\\
\L(.5*.07) $e$\\
\L(.475*.60) $a'$\\
\endSetLabels
\begin{center}
\AffixLabels{\centerline{\epsfig{file =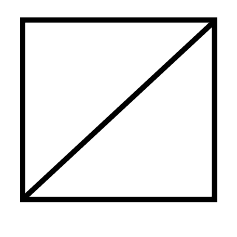,width=3.0cm,angle=0} }}
\vspace{-10pt}
\end{center}
\caption{A quadrilateral in $S$}\label{f:quadins}
\end{figure}

%Let $\alpha \to \alpha'$ be the arcs flipped from $u$ to $v$ (thus $v = (u \setminus {\alpha}) \cup \alpha'$). Similarly, we write $\beta\to \beta'$ for the arcs flipped from $v$ to $w$ (that is $w = (v \setminus {\beta}) \cup \beta'$). Note that $\beta$ and $\alpha'$ are distinct.

%If $\alpha'$ and $\beta$ belong to different triangles on $v$, then $u,v,w$ are contained in a quadrilateral in $\F(S)$, with $v':= (u \setminus \beta) \cup \beta'$ making up the fourth vertex. 

%If however $\alpha'$ and $\beta$ do belong to the same triangle in $S$ then $\beta$ is a side of a quadrilateral in $S$ containing $\alpha$ and $\alpha'$. We denote $\gamma, \delta, \varepsilon$ the remaining sides of the quadrilateral as in Figure \ref{f:quadins}.

%If $b\ne c$ then there is a second triangle to which $b$ belongs. Such triangle, together with the quadrilateral
%
%%Note that $\deg(u)\geq \deg(v)$ implies that arc $\alpha$ did not surround unflippable arcs, since otherwise an unflippable arc in $u$ would become flippable in $v$.  This in turn implies that $\gamma\neq \delta$ and that $\beta \neq \varepsilon$. 
%
%If $\beta \neq \gamma$, then there is another triangle glued on $\beta$. The quadrilateral and this additional triangle form a pentagon, so one can complete the edges $(u,v)$ and $(v,w)$ to a pentagon in $\F(S)$ as in Figure \ref{f:figure2}.

\begin{figure}[h]
%\ShowGrid
\leavevmode \SetLabels
\L(.324*.5) $b$\\
\L(.524*.78) $b$\\
\L(.47*.91) $a'$\\
\L(.255*.57) $a$\\
\endSetLabels
\begin{center}
\AffixLabels{\centerline{\epsfig{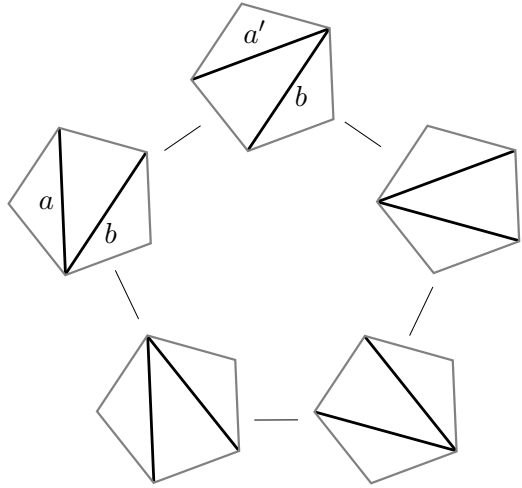} }}
\vspace{-10pt}
\end{center}
 \caption{Completing edges to a pentagon in $\F(S)$} 
\label{f:figure2}
\end{figure}

%
%If however $\beta = \gamma$, then $u,v,w$ are joined by flips that lie in a cylinder bounded by $\delta$ and $\varepsilon$. This completes the proof.
%\end{proof}
%
%A flip that is realized by an arc contained in a cylinder formed by two triangles (as in the above lemma) will be called a {\it cylinder flip}.

A flip that is supported on a cylinder will be called a {\em cylinder flip}, see Figure \ref{f:figurea}. The following lemma states that it is always possible to ``bypass" a cylinder flip:

\begin{lemma}\label{lem:cylinder}
Let $S$ be a non-exceptional surface and let $u,v \in \F(S)$ be adjacent vertices such that $\deg(u) = \deg(v)$ and the flip from  $u$ to $v$ is a cylinder flip. Then there exists a path $u=u_1,u_2,\hdots,u_k=v$ with 
$$
\deg(u_i)=\deg(u)=\deg(v)
$$
and such that the flip from $u_i$ to $u_{i+1}$ is a non-cylinder flip for all $i\in\{1,\hdots,k-1\}$.
\end{lemma}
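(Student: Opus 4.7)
Let $C\subset S$ be the cylinder supporting the flip $u\to v$, with boundary arcs $\gamma_1,\gamma_2$ (interior loops at marked points $p_1,p_2$) and interior diagonals $a,b$ of $u$; thus $v=u\setminus\{a\}\cup\{a'\}$, with $a'$ the other diagonal of $C$. The first observation is that, because $S$ is non-exceptional while a cylinder with two marked points is exceptional, $C$ is a proper essential subsurface of $S$. In particular each boundary arc $\gamma_i$ has an adjacent triangle of $u$ on the side of $S\setminus C$; call them $T_3=(\gamma_1,x,y)$ and $T_4=(\gamma_2,z,w)$, with $x,y,z,w$ genuine arcs disjoint from the interior of $C$.

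My plan is to construct the bypass explicitly, in three stages: an auxiliary non-cylinder flip to disrupt the cylindrical configuration, a short internal passage consisting of non-cylinder flips, and finally a reversed auxiliary flip to restore the outer structure. In the first stage I would flip $\gamma_1\to\gamma_1'$: the quadrilateral $T_1\cup T_3$ (with $T_1\subset C$ the triangle sharing $\gamma_1$) has four pairwise distinct sides $a,b,x,y$, so this is a non-cylinder flip; moreover, degree is preserved since the non-exceptional hypothesis precludes $\gamma_1'$ from bounding a once-punctured disk. In the resulting triangulation $u_1$ the quadrilateral around $a$ has four pairwise distinct sides $b,\gamma_2,\gamma_1',x$, so a further non-cylinder flip at $a$ is available. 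However, a direct computation shows this flip produces a new arc $\tilde a\neq a'$, so the middle stage must consist of more than one flip.

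The main obstacle is therefore the construction of the middle stage: exhibiting a short sequence of non-cylinder flips that carries $u_1$ to the analogous triangulation $v_1$ obtained from $v$ by flipping $\gamma_1$. The subtlety is that flipping $\gamma_1$ in $v$ uses a different quadrilateral ($T_1'\cup T_3$, where $T_1'\subset C$ contains $a'$ in place of $a$), producing a different outside arc $\gamma_1''\neq\gamma_1'$, so $u_1$ and $v_1$ generally differ in at least two arcs. I would address this by enlarging the bypass to include further auxiliary flips: for example, also flipping $\gamma_2\to\gamma_2'$, and/or flipping some arcs of $u$ outside $C$ incident to $p_1$ or $p_2$, until the surrounding structure is sufficiently generic that a single non-cylinder flip carries $a$ to $a'$. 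A case analysis based on the local configuration near $\partial C$ (in particular whether $p_1=p_2$, whether the triangles $T_3,T_4$ share arcs, and whether $\gamma_1'$ and $a'$ intersect) should produce explicit bypass paths, with length typically between $5$ and $7$.

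Throughout, degree preservation reduces to checking that no intermediate arc bounds a once-punctured disk, and this is guaranteed by the non-exceptional hypothesis, which is invoked repeatedly to ensure both that the needed auxiliary arcs exist in $S\setminus C$ and that no new unflippable configurations appear along the path.
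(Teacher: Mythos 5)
Your setup and your opening move are essentially right, and in fact coincide with the first step of the paper's argument in the generic case: the paper also begins the bypass by flipping a boundary arc of the cylinder across the quadrilateral formed by the inner cylinder triangle and the adjacent outer triangle. But the proposal stops exactly where the content of the lemma begins. Having correctly observed that one further flip of the diagonal does not produce $a'$, you defer the entire ``middle stage'' to unspecified further auxiliary flips and a case analysis that ``should produce explicit bypass paths.'' Exhibiting those paths and checking, flip by flip, that each is supported on a quadrilateral with four pairwise distinct sides (which is what simultaneously guarantees non-cylinder and degree-preserving) \emph{is} the lemma; so this is a genuine gap rather than a routine verification. The missing idea is small but essential: the quadrilateral supporting the cylinder flip has sides $\gamma_1, b, \gamma_2, b$, and adjoining the outer triangle $T_3=(\gamma_1,x,y)$ along $\gamma_1$ yields a pentagon in $S$ with sides $x,y,b,\gamma_2,b$ whose two relevant triangulations $\{\gamma_1,a\}$ and $\{\gamma_1,a'\}$ are precisely the restrictions of $u$ and $v$. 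These are adjacent vertices of the associated $5$-cycle in $\F(S)$, and the complementary arc of that cycle is the desired bypass of length $4$ (not $5$--$7$); it starts by flipping $\gamma_1$, ends by restoring it, and one checks by inspection that each of its four flips sits in a quadrilateral with distinct sides provided $x\neq y$ and $x,y\neq\gamma_2$. When $x$ or $y$ equals $\gamma_2$ one adjoins one more triangle and routes through a hexagon instead. None of this appears in your write-up.

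A secondary but real problem is your treatment of the degenerate configurations. You assert that both $\gamma_1$ and $\gamma_2$ admit outer triangles with distinct sides, and that degree preservation of the first flip follows because non-exceptionality ``precludes $\gamma_1'$ from bounding a once-punctured disk.'' Neither claim is correct as stated: a single boundary arc of $C$ may lie in $\partial S$, or may bound a once-punctured disk on its far side so that its outer triangle is folded ($x=y$), without $S$ being exceptional --- only when \emph{both} boundary arcs are degenerate is $S$ forced to be exceptional. In the folded case your first flip is still legal, but it un-folds the triangle $(\gamma_1,x,x)$ and renders $x$ flippable, so the degree \emph{increases}, which the lemma forbids. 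The correct use of non-exceptionality here is to guarantee that at least one of the two boundary arcs of $C$ is neither contained in $\partial S$ nor bounds a once-punctured disk, and to run the whole construction from that side.
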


\begin{remark}
If $u,v\in \F(S)$ are adjacent vertices of equal degree and  the flip from $u$ to $v$ is not a cylinder flip, then such flip is supported on a quadrilateral on $S$ whose boundary arcs are pairwise distinct; compare with Figures \ref{f:nonflip} and \ref{f:figurea}.
\end{remark}

%Before proving the lemma, let us remark the following. Our goal is to furnish a path $\alpha$ in $\F(S)$ in which all the vertices have the same degree, and so that edges do not arise from cylinder flips. If $\alpha$ is such a path and $e$ is an edge of $\alpha$, then $e$ corresponds to a flip which is performed in a quadrilateral whose four boundary arcs are distinct on $S$. Indeed, note that a cylinder flip is precisely a flip in a quadrilateral in $S$, two of whose opposite arcs that are identified in $S$.  As was observed previously, a vertex $v$ of non--maximal degree (among the vertices making up $\alpha$) corresponds to a triangulation which contains a single arc $a$ surrounded by a loop $b$, both of which are arcs in the triangulation $v$. Consequently, if there is a change in degree among the vertices making up $\alpha$, say in the edge $u\to v$, this edge must flip a loop $b$ which surrounds an arc $a$.  Observe that the flip $u\to v$ occurs in a quadrilateral within $S$, and that two adjacent arcs which make up this quadrilateral are identified in $S$.

\begin{proof}[Proof of Lemma \ref{lem:cylinder}]
Let $u$ and $v$ be adjacent vertices with $\deg(u) =\deg(v)$ and such that the flip from $u$ to $v$ is a cylinder flip. Then $u$ and $v$ contain the arcs the left and right pictures of Figure \ref{f:figurea}, respectively; we will refer to the labeling shown therein. 

\begin{figure}[h]
%\ShowGrid
\leavevmode \SetLabels
\L(.22*.48) $a$\\
\L(.55*.48) $a$\\
\L(.425*.48) $b$\\
\L(.76*.48) $b$\\
\L(.325*.935) $l$\\
\L(.665*.935) $l$\\
\L(.325*.005) $l$\\
\L(.665*.005) $l$\\
%\L(.474*.61) $\tiny{c}$\\
%\L(.65*.57) $a$\\
\endSetLabels
\begin{center}
\AffixLabels{\centerline{\epsfig{file =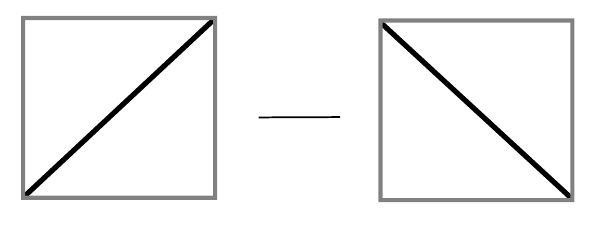,width=7.0cm,angle=0} }}
\vspace{-10pt}
\end{center}
\caption{Local schematics for a cylinder flip.}\label{f:figurea}
\end{figure}

Observe that if $a = b$ or if both $a$ and $b$ are contained in $\partial S$, then $S$ is exceptional. We can thus assume that $a \not\subset \partial S$ and $a\neq b$. In particular $a$ belongs to two distinct triangles on $u$ and we have the following pentagon as in Figure \ref{f:figureb}.

\begin{figure}[h]
%\ShowGrid
\leavevmode \SetLabels
\L(.47*.48) $a$\\
\L(.59*.36) $b$\\
\L(.50*.005) $l$\\
\L(.55*.81) $l$\\
\L(.39*.29) $d$\\
\L(.42*.80) $c$\\
\endSetLabels
\begin{center}
\AffixLabels{\centerline{\epsfig{file =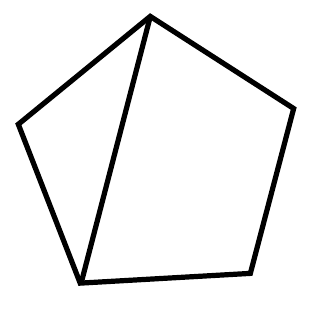,width=3.0cm,angle=0} }}
\vspace{-10pt}
\end{center}
\caption{A pentagon in $S$, with $\alpha$ belonging to two distinct triangles}\label{f:figureb}
\end{figure}

%\marginpar{\tiny{$\mu \to c$, $\lambda \to d$}}
%\marginpar{\tiny{$\eta \to e$, $\xi \to f$, $\gamma \to l$}}

If $c=d$ then $b \not \subset \partial S$, for otherwise $S$ is exceptional. We can thus consider the other  triangle to which $b$ belongs, and denote its other edges by $e$ and $f$. Again, if $e=f$, then $S$ is exactly a four times punctured sphere and is thus exceptional. It follows that $e\neq f$. As the setup is symmetric, this in turn implies that $c\neq d$.
We therefore have three possibilities: 

\smallskip

\noindent{\em Case 1: $c,d \ne b$.} In this case, the original cylinder flip from $u$ to $v$ can be completed to a pentagon by considering the flips inside the pentagon bounded by $c,d, b$ and the two copies of $l$ (see Figure \ref{f:figurec}). The path we need between $u$ and $v$ is the path inside this pentagon that connects $u$ to $v$ in a clockwise manner. By the remark before the proof, the only remaining issue is to check that all the flips performed along this path are made on quadrilateral edges which are distinct in $S$. This may be verified, for instance, by looking at Figure \ref{f:figurec}.

%\marginpar{\tiny{The figure needs a little more labeling, to tell where $v$ is. Also, what is Figure C?}}

\begin{figure}[h]
%\ShowGrid
\leavevmode \SetLabels
\L(.51*.97) $\tiny{d}$\\
\L(.675*.725) $\tiny{d}$\\
\L(.475*.90) $\tiny{a}$\\
\L(.645*.65) $\tiny{a}$\\
\L(.457*.76) $\tiny{l}$\\
\L(.56*.83) $\tiny{l}$\\
\L(.602*.473) $\tiny{l}$\\
\L(.726*.575) $\tiny{l}$\\
\L(.43*.92) $\tiny{c}$\\
\L(.602*.67) $\tiny{c}$\\
\L(.52*.67) $\tiny{b}$\\
\L(.685*.415) $\tiny{b}$\\
%\L(.65*.57) $a$\\
\endSetLabels
\begin{center}
\AffixLabels{\centerline{\epsfig{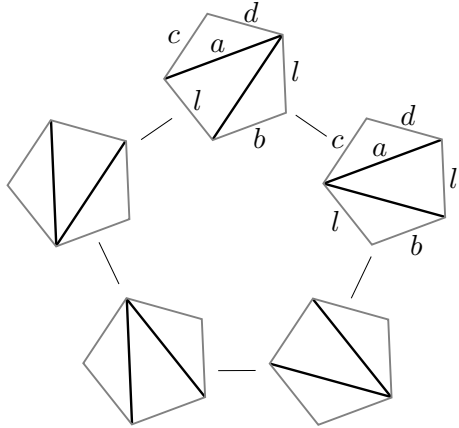} }}
\vspace{-10pt}
\end{center}
 \caption{The pentagon in $\F(S)$ when $c,d \neq b$} 
\label{f:figurec}
\end{figure}

\smallskip

\noindent{\em Case 2: $d = b$.}  As  cannot be a torus with a single boundary component and a single marked point on the boundary, we have $c\not \subset \partial S$.  Therefore $c$ belongs to a second triangle; we denote the two remaining edges of this triangle by $c'$ and $c''$, as in Figure \ref{f:figured}. Again, as $S$ is not exceptional (and in this case is not a torus with two marked points), we have that that $c' \neq c''$. We now have a hexagon formed by the different edges of the triangles, and we flip in this hexagon following the schematics in Figure \ref{f:figuree}.

\begin{figure}[h]
%\ShowGrid
\leavevmode \SetLabels
\L(.42*.45) $c$\\
\L(.505*.45) $a$\\
\L(.575*.7) $b$\\
\L(.58*.21) $l$\\
\L(.494*.96) $l$\\
\L(.495*-.1) $b$\\
\L(.395*.72) $c'$\\
\L(.38*.2) $c''$\\
\endSetLabels
\begin{center}
\AffixLabels{\centerline{\epsfig{file =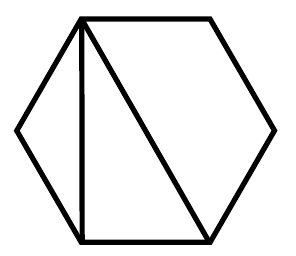,width=3.0cm,angle=0} }}
\vspace{-10pt}
\end{center}
\caption{A hexagon in $S$, where $d=b$} \label{f:figured}
\end{figure}

\begin{figure}[h]
%\ShowGrid
\leavevmode \SetLabels
\L(.37*1.00) $l$\\
\L(.44*.80) $l$\\
\L(.44*.93) $b$\\
%\L(.323*.79) $d$\\
\endSetLabels
\begin{center}
\AffixLabels{\centerline{\epsfig{file =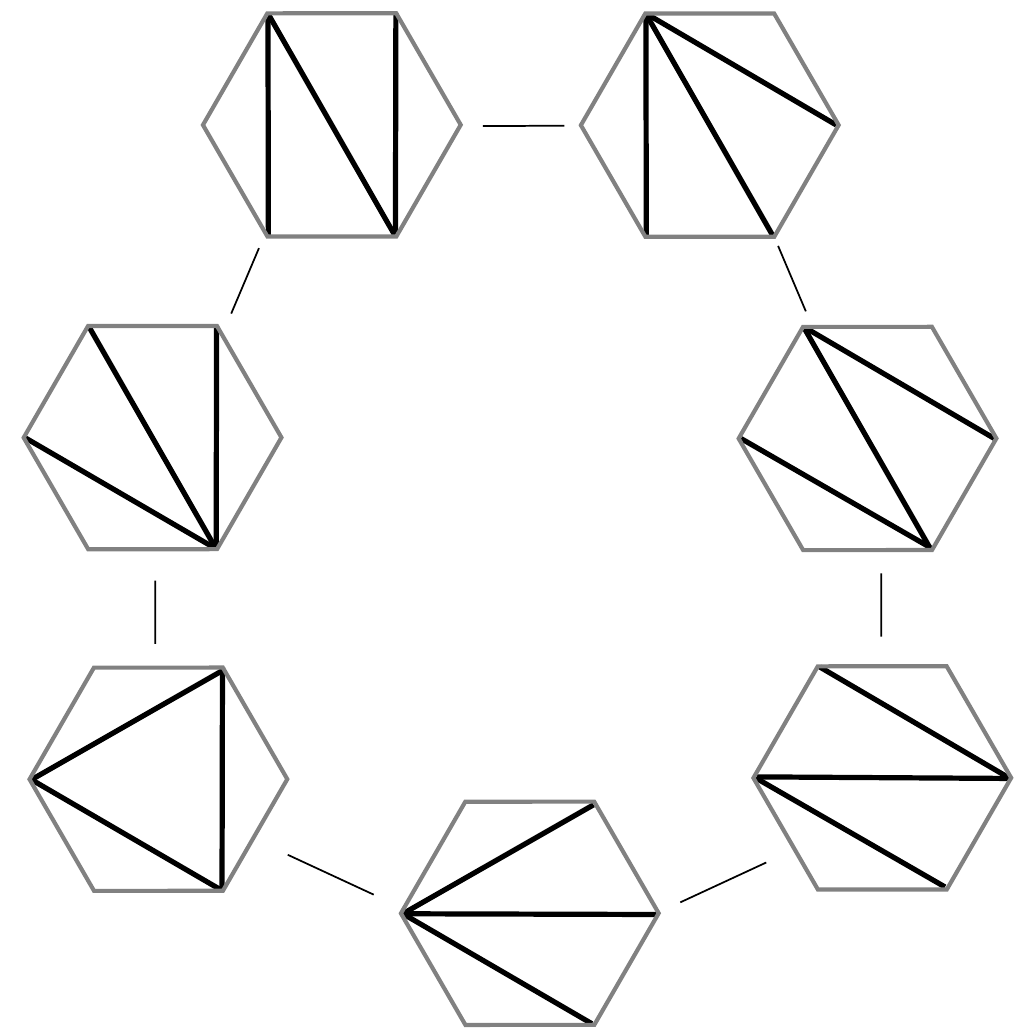,width=9.0cm,angle=0} }}
\vspace{-10pt}
\end{center}
\caption{Flipping inside a hexagon to get a path in $\F(S)$} \label{f:figuree}
\end{figure}

%\marginpar{\tiny{This needs some justification}} 

As before, it suffices to check that each flip is performed in a quadrilateral with four distinct edges in $S$, and we claim that this is the case.  The hexagon in $S$ under consideration has edges labeled cyclically by $\{d,l,d,l,c'',c'\}$.  We include the diagonal being flipped under the cylinder flip and obtain a sequence of flips as is illustrated by Figure \ref{f:figuree}.  The cylinder flip we wish to avoid is the top flip in the clockwise direction.  The sequence of flips we use to avoid the cylinder flip traverses the heptagon in the counterclockwise direction.  The claim follows immediately by inspection.

\smallskip

\noindent{\em Case 3: $c = b$.} We argue similarly -- this time $d \not  \subset \partial S$, so we consider the other triangle to which $d$ belongs.  As before, because $S$ is not exceptional, the two other sides of this triangle are necessarily distinct. Flipping in the resulting hexagon, we find a path (see Figure \ref{f:figuref}) in which it is easy to check again that all flips are performed in quadrilaterals with distinct sides.
This finishes the proof.\end{proof}

\begin{figure}[h]
%\ShowGrid
\leavevmode \SetLabels
\L(.37*1.00) $l$\\
\L(.44*.80) $l$\\
\L(.435*.94) $b$\\
\L(.323*.79) $d$\\
\endSetLabels
\begin{center}
\AffixLabels{\centerline{\epsfig{file =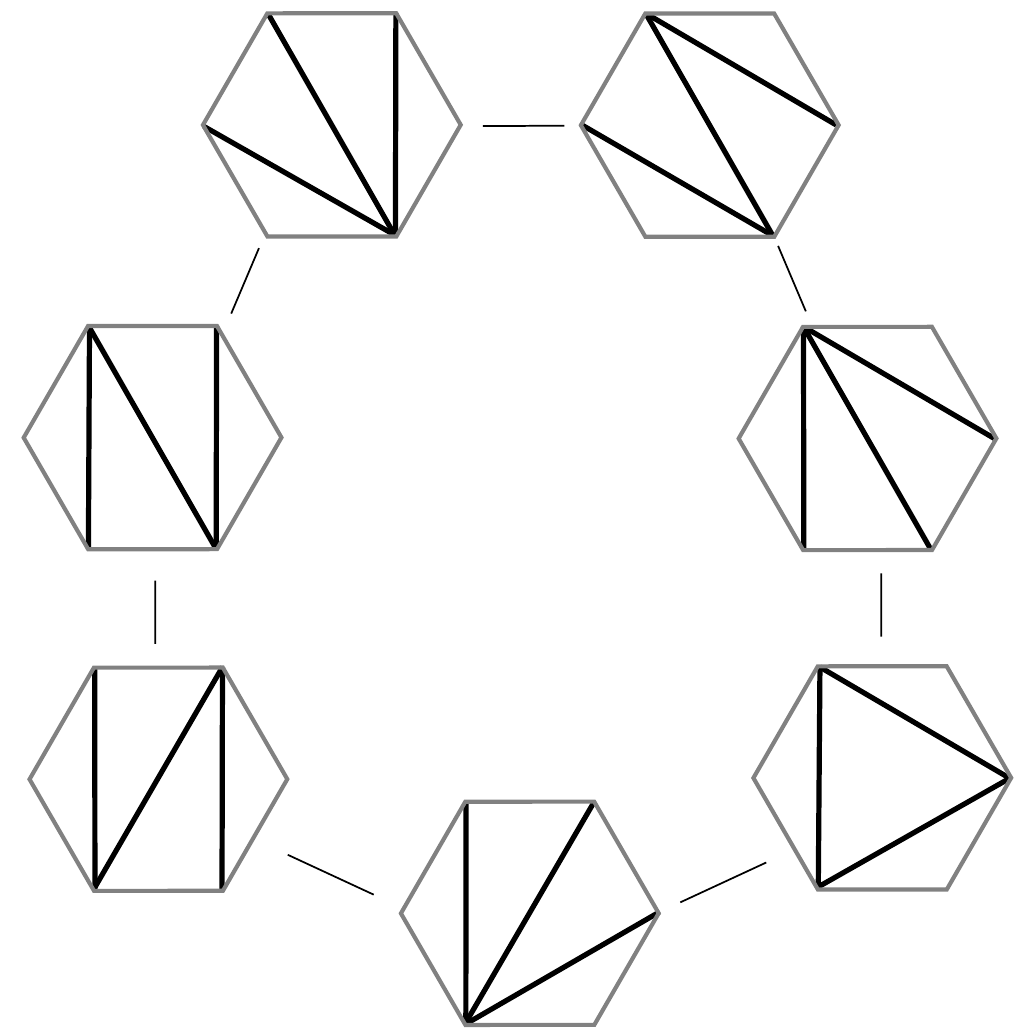,width=9.0cm,angle=0} }}
\vspace{-10pt}
\end{center}
\caption{Flipping in another hexagon to get a path in $\F(S)$} \label{f:figuref}
\end{figure}

We can now give proofs of the propositions. 

\begin{proof}[Proof of Proposition \ref{extend1}]

In light of Lemmas \ref{lem:degree} and \ref{lem:cylinder}, it suffices to show that any two vertices of maximal degree can be connected by a path $\gamma$, all of whose vertices have maximal degree. So take two vertices $u$ and $v$ of degree $d(S)$ and a  path $$\gamma_0: u=u_1, \ldots, u_n=v$$ of minimal length between them.  We will proceed by induction on $n$.

Suppose, for contradiction, that $\gamma_0$ contains a vertex of non--maximal degree, and let $1\le i\le n$ be the smallest index such that $\deg(u_i)<d(S)$. The flip from $u_{i-1}$ to $u_i$ has local schematics as illustrated in Figure \ref{f:figurebigi}.  We denote the loop surrounding the central vertex by $a$.

\begin{figure}[h]
%\ShowGrid
\leavevmode \SetLabels
\L(.315*-.20) $\mathbf{u_{i-1}}$\\
\L(.655*-.20) $\mathbf{u_i}$\\
\L(.71*.45) $a$\\
\L(.25*.09) $b$\\
\L(.735*.09) $b$\\
\L(.33*.98) $c$\\
\L(.65*.98) $c$\\
%\L(.65*.57) $a$\\
\endSetLabels
\begin{center}
\AffixLabels{\centerline{\epsfig{file =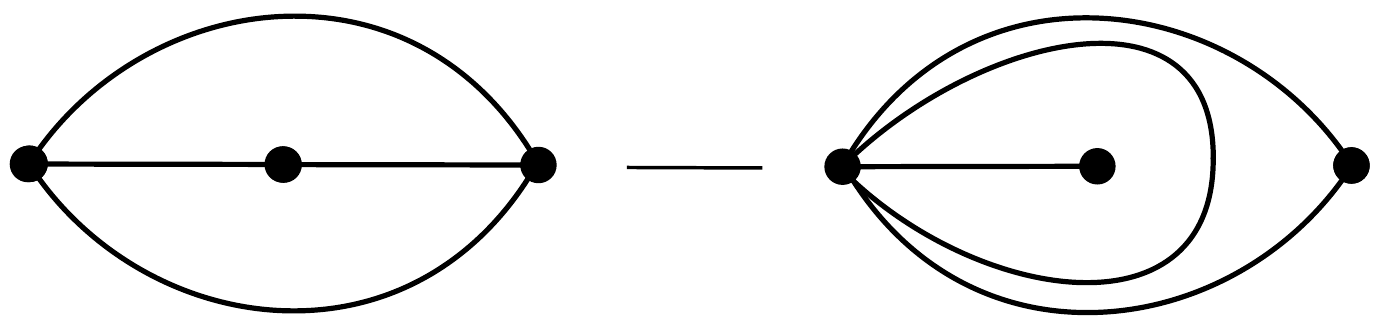,width=7cm,angle=0} }}
\vspace{-10pt}
\end{center}
\caption{Encountering a non--maximal degree vertex on $\gamma_0$} \label{f:figurebigi}
\end{figure}

%\marginpar{\tiny{ $\alpha \to a$, $\xi \to b$, $\eta \to c$}}

As the degree of $v$ is maximal, the loop $a$ in Figure \ref{f:figurebigi} must be flipped again along $\gamma_0$. If the arc resulting from flipping $a$ remains surrounded by the two arcs $b$ and $c$ in Figure \ref{f:figurebigi}, then the result is locally described by the left side of Figure \ref{f:figurebigi}. However, the minimality of $n$ implies such a sequence of flips cannot occur, for otherwise $\gamma_0$ could be shortened to a path with the same endpoints and length $n-2$. In particular, at least one edge of $\gamma_0$ between $u_i$ and $v$ must flip one of the edges $\{b,c\}$. The local behavior of the resulting triangulation $u'$ obtained by flipping one of $\{b,c\}$ is given by Figure \ref{f:figurebigii}. 

\begin{figure}[h]
%\ShowGrid
\leavevmode \SetLabels
%\L(.65*.85) $\gamma$\\
%\L(.474*.61) $\tiny{c}$\\
%\L(.65*.57) $a$\\
\endSetLabels
\begin{center}
\AffixLabels{\centerline{\epsfig{file =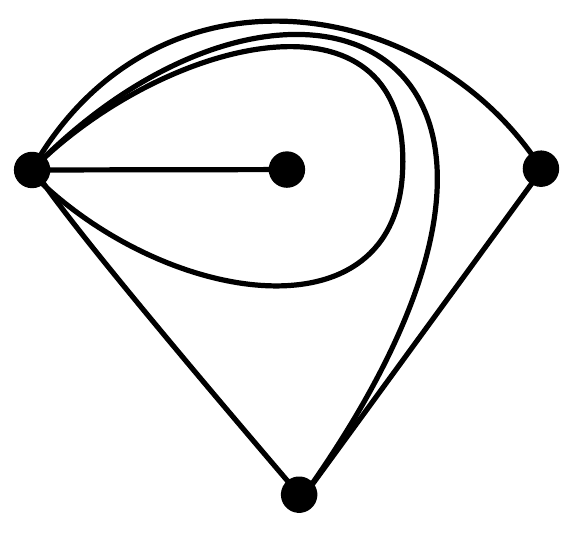,width=3.0cm,angle=0} }}
\vspace{-10pt}
\end{center}
\caption{The local behavior of the triangulation $u'$} \label{f:figurebigii}
\end{figure}

Observe that by construction, $d_{\F(S)}(v,u')<d_{\F(S)}(v,u_i)$, where $d_{\F(S)}$ denotes the combinatorial distance in $\F(S)$.  Now, Figure \ref{f:figurebigiii} gives a path to $u'$ which avoids introducing the arc $a$ until the last flip.

\begin{figure}[h]
%\ShowGrid
\leavevmode \SetLabels
%\L(.65*.85) $\gamma$\\
%\L(.474*.61) $\tiny{c}$\\
%\L(.65*.57) $a$\\
\endSetLabels
\begin{center}
\AffixLabels{\centerline{\epsfig{file =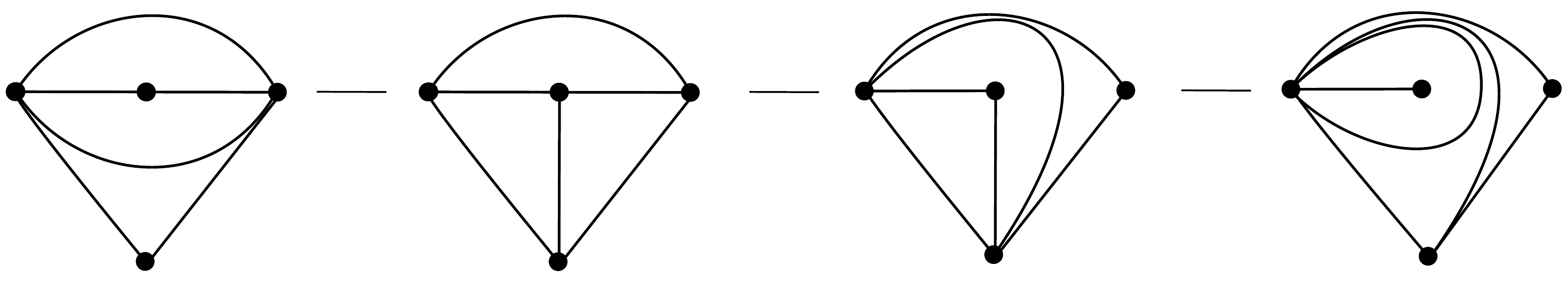,width=11cm,angle=0} }}
\vspace{-10pt}
\end{center}
\caption{Modifying $\gamma_0$ to avoid $a$ before $u'$} \label{f:figurebigiii}
\end{figure}

The previous two paragraphs furnish a path $\gamma_1$ between $u$ and $v$ with the following two properties: $u'$ is a vertex of $\gamma_1$ , and no triangulation corresponding to a vertex on $\gamma_1$ between $u$ and $u'$ contains the arc $a$.

We repeat the construction above, starting with the flip corresponding to the edge of $\gamma_1$ which resulted in the triangulation $u'$.  By induction on $n$, after $k\leq n$ modifications, we will obtain a path $\gamma_k$ from $u$ to $v$, each vertex of which other than the penultimate (i.e. the vertex immediately preceding $v$) does not contain the arc $a$.  From Figure \ref{f:figurebigi}, it follows that we may delete the last two edges of $\gamma_k$ to obtain a path from $u$ to $v$ in which every triangulation avoids $a$.

%\marginpar{\tiny{what is Figure I?}}

Because there are only finitely many possible unflippable arcs occurring along $\gamma_0$, and because none of the modifications performed to produce $\gamma_k$ from $\gamma_0$ introduce any new unflippable arcs, we may perform finitely many further modifications of $\gamma_k$ in order to obtain a path $\gamma$ from $u$ to $v$ in which each triangulation has no unflippable arcs.
\end{proof}

\begin{proof}[Proof of Proposition \ref{extend2}]
%\marginpar{\tiny{It seems Figure 15 is the same as Figure 12. Can't we use just one of them?}}
The proposition will follow from an argument similar to that of Proposition \ref{extend1}. Locally, the flip from $u$ to $v$ is modeled on Figure \ref{f:figurebigi}.

Suppose first $w \in \link(v) \setminus \{u\}$ is obtained by flipping along an arc $a$ which is not portrayed in Figure \ref{f:figurebigi}; in particular, $w = (v \setminus a) \cup a'.$
Then $v':= (u \setminus a) \cup a'$, $w$, $u$ and $v$ form a square in $\F(S)$ and we are done. 

Suppose now that $w \in \link(v) \setminus \{u\}$ is obtained by flipping along an arc which is portrayed in Figure \ref{f:figurebigi}. By symmetry we can suppose that it is the lower arc $b$ in such a configuration.  It follows that  the triangulation $w$ locally looks as in Figure \ref{f:figurebigii}.

Now by Lemma \ref{lem:cylinder} and Proposition \ref{extend1}, it suffices to show that we can connect $u$ and $w$ by a path whose every vertex has degree  $\deg(u)$. This is exactly one of the steps in the previous proof --- in fact, the path we need is the one shown in Figure \ref{f:figurebigiii}.
\end{proof}

\begin{remark} In fact, the arguments above show that, for every ``realizable" valence $d\le d(S)$, any two vertices of valence $d$ in $\F(S)$ may be joined by a path whose every vertex has valence at least $d$ and whose every edge is extendable. 
\end{remark}

%%%%%%%%%%%%%%%%%%%%%%%%%%%%%%%%%%%%%%%%%%%%%%%%%%%%

\section{Proof of Theorem \ref{invariant-arc}}

\begin{proof}[Proof of  part (1) of Theorem \ref{invariant-arc}]
Choose  a vertex $u\in \F(S)$ with $\deg(u) = d(S)$. Since $\phi$ is injective, it follows that $$d(S) = \deg(u) \le \deg(\phi(u)) \le d(S'),$$ the desired conclusion. 
\end{proof}

\subsection{The invariant multiarc} The rest of the section is devoted to proving  part (2) of Theorem \ref{invariant-arc}. Let $u \in \F(S)$ be a vertex with $\deg(u) = d(S)$, and let $u_1, \ldots, u_{d(S)}$ be its neighbors. Define $$A(u) = \phi(u) \cap \phi(u_1) \cap \ldots \cap \phi(u_{d(S)});$$ that is, $A(u)$ is the collection of those arcs of $u$ that are not flipped when passing
from $u$ to $u_i$, with $i=1, \ldots, d(S)$. Observe that, by construction, $A(u) \subset \phi(v)$ for every $v\in \link(u)$. 
Since $\phi$ is injective, we have that $\phi(u_i) \ne \phi(u_j)$ if $i\ne j$, and thus $A(u)$ consists of exactly $d(S') -d(S)$ vertices.

Our first step is to prove the following: 

\begin{lemma}
Let $u,v \in \F(S)$ be vertices with $\deg(u) = \deg (v) = d(S)$. Then $A(u) = A(v)$.
\label{inv-lemma}
\end{lemma}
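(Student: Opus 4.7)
The plan is to reduce to the extendable-edge case via Proposition \ref{extend1}, and then to exploit Lemma \ref{shallow} together with the Korkmaz--Papadopoulos classification of squares and pentagons. By Proposition \ref{extend1} together with the concluding remark of Section 2, I would first produce a path $u = w_0, w_1, \ldots, w_n = v$ in $\F(S)$ along which every edge is extendable and every vertex has maximal valence $d(S)$. A straightforward induction on $n$ then reduces the statement to the case of two adjacent vertices $u', v'$ of maximal degree joined by a single extendable edge.

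In this reduced setting, both $A(u')$ and $A(v')$ have cardinality exactly $d(S') - d(S)$, so it suffices to prove the containment $A(u') \subset A(v')$. Unpacking the definition of $A(v')$ shows this is equivalent to $A(u') \subset \phi(w)$ for every $w \in \link(v')$. The cases $w = u'$ and $w = v'$ are immediate from the definition of $A(u')$ (using that $v' \in \link(u')$). For any other $w$, extendability of $(u', v')$ provides a square or pentagon $\sigma \subset \F(S)$ containing $\{u', v', w\}$; by injectivity and simpliciality, $\phi(\sigma)$ is a $4$-cycle or $5$-cycle in $\F(S')$, and by Lemma \ref{shallow} applied in $\F(S')$ its common intersection $C := \bigcap_{z \in \phi(\sigma)} z$ has exactly $d(S') - 2$ elements.

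The decisive step is then to identify $C$ with the ``corner'' triple intersection at $\phi(u')$. Letting $x_1, x_2 \in \link(u')$ denote the two neighbors of $u'$ in $\sigma$, the Korkmaz--Papadopoulos description of squares and pentagons (Figures \ref{f:square} and \ref{f:pentagon}), applied now to the image $\phi(\sigma)$ in $\F(S')$, forces the two deficits $\phi(u') \setminus \phi(x_1)$ and $\phi(u') \setminus \phi(x_2)$ to be distinct singletons. Consequently $\phi(u') \cap \phi(x_1) \cap \phi(x_2)$ has exactly $d(S') - 2$ elements; since it also contains $C$, the two must coincide. Combining this with the defining property of $A(u')$ gives $A(u') \subset \phi(u') \cap \phi(x_1) \cap \phi(x_2) = C \subset \phi(w)$, which finishes the argument.

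The principal obstacle is this final identification of the corner triple intersection with $C$, since it is what rules out the potentially degenerate configuration in which the two neighbors of $\phi(u')$ in $\phi(\sigma)$ flip the same arc of $\phi(u')$; this step is precisely where the full structural picture of squares and pentagons, rather than merely the cardinality count of Lemma \ref{shallow}, enters. All remaining ingredients --- the cardinality computation $|A(w_i)| = d(S') - d(S)$ and the induction along the extendable path from Proposition \ref{extend1} --- are essentially bookkeeping.
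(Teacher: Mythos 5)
Your proposal is correct and follows essentially the same route as the paper's proof: reduce to a single extendable edge via Proposition \ref{extend1}, take the square or pentagon through $u',v',w$, apply Lemma \ref{shallow} to its image, and identify the triple intersection at the corner $\phi(u')$ with the full common intersection. The only remark worth making is that the ``decisive step'' you isolate is even easier than you suggest: since the flip of a given arc is unique, two distinct neighbours of $\phi(u')$ necessarily flip distinct arcs, so the corner triple intersection has exactly $d(S')-2$ elements without appealing to the full Korkmaz--Papadopoulos structure.
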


\begin{proof}
In light of Propositions \ref{extend1} and \ref{extend2}, it suffices to prove the result in the case where $u$ and $v$ are adjacent and where $(u,v)$ is an extendable edge. Suppose this is the case, noting
that $A(u) \subset \phi(v)$ by the construction of $A(u)$. It suffices to prove that $A(u) \subset \phi(w)$ for every $w$ adjacent to $v$. As the edge $(u,v)$ is extendable, then $u,v,w$ are contained in a square or pentagon in $\F(S)$, which we denote $\tau$.  Let $z$ be the vertex of $\tau$ adjacent to $u$ and not equal to $v$, noting that $A(u) \subset \phi(z)$, since $u$ and  $z$  are adjacent.  Now, the injectivity of $\phi$ implies that   $\phi(\tau)$ is a square or a pentagon in $\F(S')$.  It follows that the vertices of $\phi(\tau)$ have exactly $d(S') -2$ arcs in common, by Lemma \ref{shallow}. Now, $$A(u) \subset \phi(z) \cap \phi(u)\cap \phi(v) = \phi(z) \cap \phi(u)\cap \phi(v) \cap \phi(w),$$ so that $A(u)\subset \phi(w)$ as well, as we set out to prove. \end{proof}

Denote the multiarc associated to some (and hence any, by Lemma \ref{inv-lemma}) vertex of $\F(S)$ of valence $d(S)$ by $A$. As an immediate corollary of the proof of 
Lemma \ref{inv-lemma}, we have:

\begin{corollary}
Let $u\in \F(S)$ be a vertex such that $A \subset \phi(v)$ for every $v\in \link(u)$. If the edge $(u,v)$ is extendable, then $A \subset \phi(w)$ for every $w\in \link(v)$.
\label{holycrap}
\end{corollary}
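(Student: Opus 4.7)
The plan is to simply rerun the argument of Lemma \ref{inv-lemma}, observing that it used the hypothesis on $A(u)$ only through the fact that $A(u) \subset \phi(z)$ for every $z \in \link(u)$. Our current hypothesis on $A$ is precisely of this form, so the same square/pentagon computation goes through with essentially no change.

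Concretely, I would fix an arbitrary $w \in \link(v)$. The case $w = u$ is immediate from the hypothesis, so assume $w \ne u$. Applying the definition of extendability to the edge $(u,v)$ I obtain a square or pentagon $\tau \subset \F(S)$ containing $\{u,v,w\}$. I then let $z$ be the unique vertex of $\tau$ adjacent to $u$ with $z \ne v$ (the vertex opposite $v$ in the square case, and the fifth vertex diametrically opposite the edge $v$--$w$ in the pentagon case). Since $z \in \link(u)$, the hypothesis gives $A \subset \phi(z)$, which combined with $A \subset \phi(u)$ and $A \subset \phi(v)$ yields
$$A \subset \phi(u) \cap \phi(v) \cap \phi(z).$$
By injectivity and simpliciality of $\phi$, the image $\phi(\tau)$ is again a square or pentagon in $\F(S')$, and Lemma \ref{shallow} tells us that $\bigcap_{x \in \phi(\tau)} \phi(x)$ consists of exactly $d(S')-2$ arcs. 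A direct inspection of Figures \ref{f:square} and \ref{f:pentagon} confirms that this minimum is already attained by the triple $\phi(u), \phi(v), \phi(z)$, so
$$A \subset \phi(u) \cap \phi(v) \cap \phi(z) = \bigcap_{x \in \phi(\tau)} \phi(x) \subset \phi(w),$$
as desired.

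The only step that is not purely formal is the final equality above. I expect this to be the main (and only mild) obstacle: in the square case it is immediate, since the two flips involve four pairwise distinct arcs, so any three of the four vertices already realize the minimal intersection; in the pentagon case one has to read off from the five triangulations of a pentagonal region that the three diagonals that vary along $\phi(\tau)$ are already witnessed by the differences among $\phi(u), \phi(v), \phi(z)$, after which the equality follows by bookkeeping. Beyond that, the proof is a direct transcription of the argument used in Lemma \ref{inv-lemma}.
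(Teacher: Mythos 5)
Your proposal is correct and follows the same route as the paper: the paper derives this corollary verbatim from the proof of Lemma \ref{inv-lemma}, using the same choice of the vertex $z$ adjacent to $u$ in the square or pentagon $\tau$, the same application of Lemma \ref{shallow} to $\phi(\tau)$, and the same observation that the triple intersection $\phi(u)\cap\phi(v)\cap\phi(z)$ already equals the $\bigl(d(S')-2\bigr)$-element common intersection of all of $\phi(\tau)$. Your justification of that last equality (each diagonal of a pentagonal region lies in only two of its five triangulations, so three distinct vertices share no diagonal) is exactly the bookkeeping the paper leaves implicit.
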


We are finally in a position to prove part (2) of Theorem \ref{invariant-arc}. 

\begin{proof}[Proof of part (2) of Theorem \ref{invariant-arc}] Let $v$ be a vertex of $\F(S)$; we want to prove that $A \subset \phi(v)$. Choose a vertex $u$ of valence $d(S)$.  
By Propositions \ref{extend1} and \ref{extend2}, there exists a path $$u=u_0,u_1, \ldots, u_n=v$$ with the property that the edge $(u_i,u_{i+1})$ is extendable for all $0\le i< n$. 
Note that $A\subset \phi(w)$ for every $w\in \link(u)$, by Lemma \ref{inv-lemma} and the construction of $A$. 
Applying Corollary \ref{holycrap} and induction, we obtain that $A \subset \phi(u_i)$ for all $0\le i\le n$. This finishes the proof of Theorem \ref{inv-lemma}.
\end{proof}

\section{Proof of Theorem \ref{isom}}

We now give a proof of Theorem \ref{isom}. Let $S$ and $S'$ be connected surfaces, with $S$ non-exceptional and $d(S) = d(S') =d$. Let $$\phi:\F(S) \to \F(S')$$ be an injective simplicial map. The proof consists of several steps:

\subsection{Inducing a map on arcs} We first explain how the map $\phi$ induces a map $$\psi:\A(S) \to \A(S')$$ between the corresponding arc graphs.  Here, the arc graph of a surface $Z$ is the simplicial graph $\A(Z)$ whose vertices are arcs on $Z$, and where two arcs are adjacent in $\A(Z)$ if and only if they have disjoint interiors.   

Let $a \in \A(S)$ and consider $\F_a(S)$. 
Since $\F_a(S) \cong F(S\setminus a)$ and $d(S\setminus a) = d-1$, Theorem \ref{invariant-arc} applied to $$\phi: \F_a(S) \to \F(S')$$ implies that there exists a unique arc $b$ on $S$  such that $\phi(\F_a) \subset \F_b$. We set $$\psi(a) := b.$$ We state the next observation as a lemma, as we will need to make use of it later. In what follows, $i(\cdot,\cdot)$ denotes geometric intersection number between arcs.

\begin{lemma}\label{l:3} The  maps $\phi$ and $\psi$ satisfy the following properties:
\begin{enumerate}
\item  Let $v= (a_1, \ldots, a_d) \in \F(S)$. Then $\phi(v) = (\psi(a_1), \ldots, \psi(a_d))$.
\item Let $a\ne a' \in A(S)$ with $i(a,a') = 0$. Then $\psi(a)\ne \psi(a')$ and  $i(\psi(a),\psi(a')) =0$.
\item Let $a, a' \in A(S)$. If $i(a,a')=1$ then $i(\psi(a),\psi(a'))=1$.  
\label{relation}
\end{enumerate} 

\begin{proof}
Part (1) is an immediate consequence of the construction of $\psi$. For part (2), one may first extend  $a$ and $a'$ to a triangulation of $S$ and then apply (1). Finally, to see (3) note that there exist adjacent vertices $v,v'\in \F(S)$ such that $a\in v$ and $a'\in v'$. Since $\phi(v)$ and $\phi(v')$ are also adjacent, the result now follows from (1). 
\end{proof}

\end{lemma}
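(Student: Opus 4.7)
The plan is to prove (1) first, and then deduce (2) and (3) by extending the given arcs to triangulations realizing the hypothesized configurations.

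For (1), the construction of $\psi$ gives $\psi(a_i) \in \phi(w)$ for every $w \in \F_{a_i}(S)$ and every $i$. Applying this with $w = v$ yields $\{\psi(a_1), \ldots, \psi(a_d)\} \subset \phi(v)$. Since $|\phi(v)| = d$, it suffices to show the $\psi(a_i)$ are pairwise distinct. For each $k$, let $v^k$ be the triangulation obtained from $v$ by flipping $a_k$, and let $c_k \in \phi(v)$ be the unique arc that is flipped when passing from $\phi(v)$ to the adjacent triangulation $\phi(v^k)$. If $c_k = c_{k'}$ for some $k \neq k'$, then since a flip is determined by the triangulation together with the arc being flipped, we would have $\phi(v^k) = \phi(v^{k'})$, contradicting injectivity of $\phi$ (as $v^k \neq v^{k'}$). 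Hence $c_1, \ldots, c_d$ are pairwise distinct and $\phi(v) = \{c_1, \ldots, c_d\}$. For $l \neq k$ we have $a_l \in v^k$, hence $\psi(a_l) \in \phi(v^k)$ and so $c_k \neq \psi(a_l)$; writing $\psi(a_k) = c_j$ for some $j$ then forces $j = k$. This proves both distinctness and the equality $\phi(v) = (\psi(a_1), \ldots, \psi(a_d))$.

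Part (2) is immediate from (1): disjoint arcs $a \neq a'$ extend to a common triangulation $v$, and (1) applied to $v$ exhibits $\phi(v)$ as a triangulation with $d$ distinct arcs including $\psi(a)$ and $\psi(a')$; these are therefore distinct and disjoint. For (3), the condition $i(a,a') = 1$ realizes $a$ and $a'$ as the two diagonals of an embedded quadrilateral $Q \subset S$; extending the four boundary arcs of $Q$ to a triangulation $v \ni a$ and flipping $a$ in $v$ produces adjacent vertices $v = (a, b_1, \ldots, b_{d-1})$ and $v' = (a', b_1, \ldots, b_{d-1})$. Applying (1) to both, $\phi(v)$ and $\phi(v')$ share the $d-1$ arcs $\psi(b_1), \ldots, \psi(b_{d-1})$ and are distinct by injectivity of $\phi$; adjacency in $\F(S')$ therefore forces them to differ by the flip from $\psi(a)$ to $\psi(a')$, whence $i(\psi(a), \psi(a')) = 1$.

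The main obstacle is the distinctness statement in (1): it is the only place where injectivity of $\phi$ plays a nontrivial role, and every other assertion reduces to (1) together with the fact that edges of $\F(S')$ correspond to unique flips.
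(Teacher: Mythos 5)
Your overall route is the same as the paper's: establish (1) from the defining property of $\psi$, then deduce (2) by completing two disjoint arcs to a triangulation and (3) by realizing two arcs with $i(a,a')=1$ as a flip pair. You supply considerably more detail than the paper for the distinctness claim in (1), which the paper dismisses as ``immediate,'' and your mechanism for it (the $d$ flips out of $v$ map to $d$ distinct flips out of $\phi(v)$ by injectivity, forcing $\psi(a_k)$ to be the arc flipped at the $k$-th edge) is exactly the counting that the paper itself uses when it shows the invariant multiarc $A(u)$ has the right cardinality.

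There is, however, a genuine gap in your proof of (1): you assume that every arc $a_k$ of $v$ is flippable, i.e.\ that $\deg(v)=d(S)$. If $v$ contains an unflippable arc (the configuration of Figure \ref{f:nonflip}, which occurs whenever $S$ has an interior marked point), the triangulation $v^k$ does not exist for that index, you obtain fewer than $d$ arcs $c_k$, and the identification $\psi(a_k)=c_k$ --- hence the pairwise distinctness --- no longer follows. This is not merely cosmetic, because the gap propagates to (2): your derivation of (2) needs (1), with distinctness, for \emph{some} triangulation containing both $a$ and $a'$, and there are disjoint pairs that admit no maximal-degree completion at all. For instance, if $a$ is a loop bounding a once-punctured disk and $a'$ is the arc joining the basepoint of $a$ to the enclosed puncture, then $a'$ is unflippable in \emph{every} triangulation containing $a$ and $a'$, so the case you actually prove never applies to this pair. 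To close the gap one must either treat vertices of non-maximal degree directly (running your counting argument over the flippable arcs and handling the unflippable ones separately, e.g.\ by comparing with an adjacent triangulation in which the offending loop has been flipped), or prove distinctness of $\psi$ on disjoint pairs by a separate argument before asserting (1) in full generality. Your parts (3) and the remainder of (2) are fine modulo this point.
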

In the light of part (1) of Lemma \ref{l:3}, Theorem \ref{isom} will follow once we prove: 

\begin{proposition}
The map $\psi:\A(S) \to \A(S')$ is an isomorphism. 
\label{arcs}
\end{proposition}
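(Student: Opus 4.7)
The plan is to prove $\psi$ is a bijection that preserves and reflects disjointness, hence an isomorphism of simplicial graphs. Throughout I rely on Lemma~\ref{l:3}: $\psi$ sends triangulations to triangulations, preserves disjointness of distinct arcs, and preserves pairs of arcs with intersection number one. I would establish injectivity, surjectivity, and reflection of disjointness in turn.

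For injectivity, suppose $\psi(a)=\psi(a')=b$ with $a\ne a'$. Lemma~\ref{l:3}(2) excludes $i(a,a')=0$ and Lemma~\ref{l:3}(3) excludes $i(a,a')=1$, so $i(a,a')\ge 2$. I would then construct an arc $c$ on $S$ with $c\ne a$, $i(c,a)=0$ and $i(c,a')=1$ by a surgery near a transverse intersection of $a$ and $a'$: route $c$ through an adjacent quadrant between suitable marked points, crossing $a'$ exactly once and remaining disjoint from $a$. Combined with parts (2) and (3) of Lemma~\ref{l:3}, this yields $\psi(c)$ disjoint from $b$ yet $i(\psi(c),b)=i(\psi(c),\psi(a'))=1$, a contradiction.

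For surjectivity, fix $b\in \A(S')$ and choose a triangulation $v'\in \F(S')$ containing $b$. My strategy is to show every triangulation of $S'$ lies in $\phi(\F(S))$, so that $v'=\phi(v)$ for some $v$ and then $b=\psi(a)$ for some $a\in v$. Start with $u\in \F(S)$ of maximal valence $d(S)$; then $\phi(u)$ has valence $d(S')=d(S)$, hence is also of maximal valence, and $\phi$ induces a bijection $\link(u)\to \link(\phi(u))$. Connect $\phi(u)$ to $v'$ in $\F(S')$ by a path and lift it flip-by-flip: whenever the next flip in $\F(S')$ at $w=\phi(x)$ concerns an arc $\psi(a)$ with $a$ flippable in $x$, the unique lift is available and the new vertex has image equal to the next vertex of the path. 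When $a$ is not flippable in $x$, deform the path in $\F(S')$ through a common square or pentagon using the extendable-path techniques of Section~2 (Propositions~\ref{extend1}, \ref{extend2} and Lemmas~\ref{lem:degree}, \ref{lem:cylinder}) so that every flip becomes liftable. Iterating places $v'$ in $\phi(\F(S))$.

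Finally, to show $\psi^{-1}$ also preserves disjointness, suppose $b,b'\in \A(S')$ are distinct and disjoint with preimages $a,a'\in \A(S)$. If $i(a,a')=1$, Lemma~\ref{l:3}(3) gives $i(b,b')=1$, a contradiction; if $i(a,a')\ge 2$, the same auxiliary-arc construction as in the injectivity step produces an arc $c$ disjoint from $a$ with $i(c,a')=1$, again leading to a contradiction. Thus $i(a,a')=0$, completing the proof that $\psi$ is a simplicial isomorphism. The principal obstacle is the surjectivity lifting argument, which must handle the possibility that $\psi$ sends an unflippable arc in $\F(S)$ to a flippable arc in $\F(S')$, so that a naive flip-by-flip lift need not exist; the extendable-path machinery of Section~2 is tailored precisely for this difficulty. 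A secondary technicality is the topological construction of $c$ in the injectivity argument, which requires exploiting the non-exceptionality of $S$ to guarantee enough marked points for the endpoints of $c$.
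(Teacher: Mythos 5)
Your overall architecture (bijectivity of $\psi$ plus preservation and reflection of disjointness) matches the paper's, and your surjectivity strategy is in spirit the paper's: show that degrees are preserved so that neighbors of image vertices lie in the image. But there are two genuine gaps.

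The main one is in surjectivity. You correctly identify that the obstruction is an arc $a$ that is unflippable in $x$ while $\psi(a)$ is flippable in $\phi(x)$ (equivalently, $\deg(\phi(x))>\deg(x)$), but you do not close this possibility off --- you propose to ``deform the path in $\F(S')$ through a common square or pentagon using the extendable-path techniques of Section~2.'' That machinery lives in the \emph{domain} $\F(S)$ (and was already spent on Theorem~\ref{invariant-arc}); it gives you no control over which neighbors of $\phi(x)$ in $\F(S')$ are hit. If $\deg(\phi(x))>\deg(x)$, the extra neighbor of $\phi(x)$ has no candidate preimage, and rerouting in $\F(S')$ does not help because the rerouted path must itself be lifted. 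What is needed is a positive proof that $\psi$ sends unflippable arcs to unflippable arcs. The paper proves exactly this (Lemma~\ref{l:nonflip}) by a concrete topological argument: starting from the unflippable configuration $(a,b)$, flip $b$ to $b'$ to produce two triangles sharing the two arcs $a$ and $b'$; since $\psi$ preserves ``being edges of a common triangle'' and intersection numbers $0$ and $1$ (Lemma~\ref{l:3}), one deduces that $\psi(b)$ bounds a once-punctured disk containing $\psi(a)$. From this, degree preservation and surjectivity of $\phi$ (hence of $\psi$) follow immediately.

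The second gap is in injectivity. For $i(a,a')\geq 2$ you assert the existence of an arc $c$ with $i(c,a)=0$ and $i(c,a')=1$; this is a nontrivial topological claim (after minimizing position, the single crossing with $a'$ could be inessential, or the surgered arc could be inessential or peripheral), and ``route $c$ through an adjacent quadrant'' is not a proof. The paper sidesteps this entirely: once $\phi$ is known to be an isomorphism, $i(a,a')\neq 0$ is equivalent to $\F_a(S)\cap\F_{a'}(S)=\emptyset$, which is carried by $\phi$ to $\F_{\psi(a)}(S')\cap\F_{\psi(a')}(S')=\emptyset$, forcing $i(\psi(a),\psi(a'))\neq 0$ and in particular $\psi(a)\neq\psi(a')$. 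I would recommend adopting this order of argument: prove unflippability is preserved first, deduce that $\phi$ is an isomorphism, and then get injectivity of $\psi$ for free.
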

 
Indeed, once Proposition \ref{arcs} has been established, Theorem \ref{isom} will follow as a combination of  Lemma \ref{l:3} (1) and Theorem A of Disarlo \cite{Disarlo},
which states that two arc graphs are isomorphic if and only if the underlying surfaces are homeomorphic; we remark that this result is originally due to Irmak-McCarthy \cite{IM}
in the particular case when $\partial S = \emptyset$ and $S'=S$.  
 
\subsection{Proof of Proposition \ref{arcs}} The  key ingredient in the proof of Proposition \ref{arcs} is the following:

\begin{lemma}\label{l:nonflip}
Let $v\in \F(S)$. If $a\in v$ is unflippable, then $\psi(a)\in \phi(v)$ is unflippable. 
\label{nonflip}
\end{lemma}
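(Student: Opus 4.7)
I assume for contradiction that $\psi(a)$ is flippable in $\phi(v)$, giving a neighbor $w=(\phi(v)\setminus\{\psi(a)\})\cup\{c\}\in\F(S')$. The first observation is that $c\notin\psi(\A(S))$: if $c=\psi(a')$, then Lemma~\ref{l:3}(2) applied inside $w$ gives $a'$ disjoint from every arc of $v\setminus\{a\}$, making $(v\setminus\{a\})\cup\{a'\}$ a triangulation of $S$ adjacent to $v$ and realizing a flip of $a$, contradicting unflippability. Consequently $w\notin\phi(\F(S))$.

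Let $b\in v$ be the loop completing the self-folded triangle at $a$, and set $v_1=(v\setminus\{b\})\cup\{b^*\}$ and $v_2=(v_1\setminus\{a\})\cup\{a^*\}$; the arc $a$ is flippable in $v_1$. The images give a path $\phi(v),\phi(v_1),\phi(v_2)$ in $\F(S')$ flipping $\psi(b)$ then $\psi(a)$. Comparing the two flips emanating from $\phi(v)$ --- to $w$ and to $\phi(v_1)$ --- via Lemmas~\ref{lem:degree} and~\ref{lem:cylinder} places them in a common square or pentagon in $\F(S')$, with cylinder couplings reducible by Lemma~\ref{lem:cylinder}. In the square case $\phi(v),w,w',\phi(v_1)$, the fourth vertex $w'$ is both $(w\setminus\{\psi(b)\})\cup\{\psi(b^*)\}$ and $\phi(v_2)=\phi(v_1)\setminus\{\psi(a)\}\cup\{\psi(a^*)\}$ (the latter by Lemma~\ref{l:3}(1)); equating the two descriptions yields $c=\psi(a^*)\in\psi(\A(S))$, contradicting the first step.

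The main obstacle is the pentagon case, where $\psi(a)$ and $\psi(b)$ share a triangle in $\phi(v)$. Here the five pentagon vertices cyclically include $\phi(v),\phi(v_1),\phi(v_2)$ as three consecutive positions (the adjacency $\phi(v_1)\sim\phi(v_2)$ forced by Lemma~\ref{l:3}(1) applied to the known flip $a\to a^*$ in $v_1$), leaving the remaining two vertices to contain the new arc $c$ as one of the five pentagon diagonals. The key structural point to exploit is that the local region in $S$ around the self-folded pair $\{a,b\}$ --- a once-punctured bigon --- has flip graph equal to the 3-vertex path $v\leftrightarrow v_1\leftrightarrow v_2$, which cannot be enlarged under $\phi$ to a genuine pentagon in $\F(S')$ without introducing arcs in $S'$ outside $\psi(\A(S))$ in a manner incompatible with the intersection-preservation in Lemma~\ref{l:3}(2)--(3); carefully matching the cyclic structure of the pentagon's five diagonals against the flips of $\psi(a),\psi(b)$ in $\phi(v)$ and the flip of $\psi(a)$ in $\phi(v_1)$ then forces $c\in\psi(\A(S))$, again contradicting the first step.
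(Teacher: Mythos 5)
Your argument has genuine gaps at its load-bearing points. First, the opening claim that $c\notin\psi(\A(S))$ uses Lemma \ref{l:3}(2) in the wrong direction: that lemma says disjoint arcs have disjoint images, whereas you need the converse --- that $\psi(a')$ being disjoint from every arc of $\phi(v)\setminus\{\psi(a)\}$ forces $a'$ to be disjoint from every arc of $v\setminus\{a\}$. At this stage that converse is not available; it is essentially the injectivity half of Proposition \ref{arcs}, whose proof uses Corollary \ref{degree}, which in turn rests on the present lemma, so the step is circular as written. Second, your appeal to Lemma \ref{lem:degree} to place the two flips out of $\phi(v)$ in a common square or pentagon requires a degree inequality between the relevant vertices of $\F(S')$, and control over degrees in the target is exactly what is only established \emph{after} this lemma (Corollary \ref{degree}); moreover the remaining alternative of that lemma (two flips supported in a common cylinder) is not ``reducible by Lemma \ref{lem:cylinder}'', which merely produces detour paths and does not yield a square or pentagon containing the three given vertices. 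Finally, the pentagon case --- which you correctly flag as the main obstacle --- is not actually carried out: ``carefully matching the cyclic structure \dots forces $c\in\psi(\A(S))$'' is the assertion to be proven, not a proof. (By contrast, your square-case computation is fine: uniqueness of the flip of a given arc does force the fourth vertex to equal $\phi(v_2)$, hence $c=\psi(a^*)$.)

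The paper avoids all of this by arguing forward rather than by contradiction. It first notes that $\psi$ sends edges of a common triangle of $v$ to edges of a common triangle, because ``being two sides of one triangle of $v$'' is characterized by the existence of an arc meeting both exactly once and disjoint from every other arc of $v$, a property preserved by Lemma \ref{l:3}(2)--(3). It then completes $\{a,b\}$ to a triangulation in which, after one flip $b\to b'$, two triangles $\Delta,\Delta'$ share the two edges $a$ and $b'$; the images $\psi(\Delta),\psi(\Delta')$ must share $\psi(a)$ and $\psi(b')$, and the intersection data forces $\psi(b)$ to bound a once-punctured disk containing $\psi(a)$, i.e.\ $\psi(a)$ is unflippable. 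You would do better to look for a direct transfer of the self-folded configuration along these lines than to try to rule out a hypothetical flip of $\psi(a)$.
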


Before proving Lemma \ref{nonflip}, observe that if two arcs $a,a'$ are edges of the same triangle of a triangulation $v$, then $\psi(a)$ and $\psi(a')$ are edges of the same  triangle on $S'$. To see this, note that two arcs $a,a'$ are edges of the same triangle if and only if there exists an arc $b \in S$ such that $i(a,b) = i(a',b) = 1$ and $i(b,c)=0$ for all $c \in v\setminus  \{a, a'\}$. The desired conclusion now follows from  parts (2) and (3) of Lemma \ref{l:3}.

In particular, $\psi$ induces a map from the set triangles determined by $v$ to the set of triangles determined by $\phi(v)$. Moreover, if two triangles share  a given arc $a\in \A(S)$, then  their images under $\psi$ share the arc $\psi(a)$. 

\begin{proof}[Proof of Lemma \ref{l:nonflip}]
Let $v$ be a triangulation, and suppose $a\in v$ is unflippable, so  $v$ contains the arcs $a$ and $b$ depicted in Figure \ref{f:nonflip}, as we draw again in Figure \ref{f:nonflip2}. Extend $a$ and $b$ to a triangulation $v'$ that contains the solid arcs in Figure \ref{f:nonflip2}. Consider the triangulation $v''$ obtained from $v'$ by flipping $b$ to the arc $b'$ of Figure \ref{f:nonflip2}; as such, $v''$ determines two triangles $\Delta$ and $\Delta'$ that share exactly two arcs, namely $a$ and $b'$. 

By the preceding paragraphs and by Lemma \ref{l:3}, the triangles $\psi(\Delta)$ and $\psi(\Delta ')$ share exactly two arcs, namely $\psi(a)$ and $\psi(b')$.  Lemma \ref{l:3} implies that $\psi$ preserves the property of having intersection number 0 (resp. 1), so that we may conclude that $i(\psi(b),\psi(b'))=1$ and that $i(\psi(b),\psi(c))=0$ for every arc $c \in \Delta \cup \Delta'$ with $c\ne b'$.  In particular, $\psi(b)$ bounds a once punctured disk on $S'$ whose interior contains the interior of $\psi(a)$; in other words, $\psi(a)$ is unflippable with respect to $\phi(v')$, and hence is unflippable with respect to $\phi(v)$. 
\end{proof}

\begin{figure}[h]
%\ShowGrid
\leavevmode \SetLabels
\L(.50*.190) $\tiny{b}$\\
\L(.45*.517) $\tiny{a}$\\
\L(.55*.517) $\tiny{b'}$\\
\endSetLabels
\begin{center}
\AffixLabels{\centerline{\epsfig{file =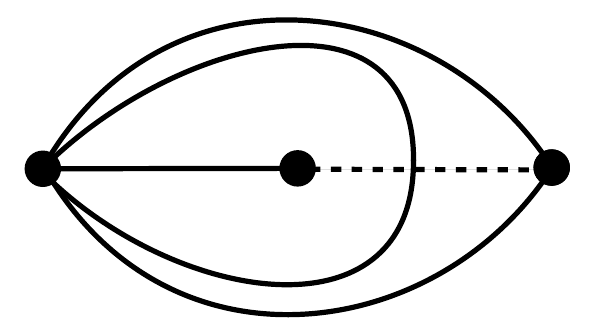,width=3.0cm,angle=0} }}
\vspace{-10pt}
\end{center}
\caption{An unflippable arc, revisited} \label{f:nonflip2}
\end{figure}

Observe that a vertex $v\in \F(S)$ has maximal valence if and only if all arcs of $v$ are flippable with respect to $v$. As a consequence of Lemma \ref{nonflip}, we obtain:

\begin{corollary}
For all $v\in \F(S)$, $\deg(\phi(v)) = \deg(v)$. In particular, $\phi$ is surjective and is thus an isomorphism. 
\label{degree}
\end{corollary}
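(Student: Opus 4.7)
The plan is to establish the two valence inequalities, combine them, and then use connectedness of $\F(S')$ to conclude surjectivity.

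First, I would prove the inequality $\deg(\phi(v)) \le \deg(v)$. Recall that the valence of a vertex $u \in \F(S)$ equals $d(S)$ minus the number of arcs in $u$ that are unflippable with respect to $u$. By Lemma \ref{l:3}(1), the arcs of $\phi(v)$ are precisely $\psi(a_1), \ldots, \psi(a_d)$, where $v = \{a_1, \ldots, a_d\}$, and this listing has no repetitions (it is a triangulation of $S'$). Lemma \ref{l:nonflip} guarantees that if $a_i$ is unflippable with respect to $v$, then $\psi(a_i)$ is unflippable with respect to $\phi(v)$. Hence the number of unflippable arcs in $\phi(v)$ is at least the number in $v$, and since $d(S) = d(S')$, this yields $\deg(\phi(v)) \le \deg(v)$.

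Next, the reverse inequality $\deg(v) \le \deg(\phi(v))$ is immediate from the simplicial injectivity of $\phi$: distinct neighbors of $v$ in $\F(S)$ are sent to distinct neighbors of $\phi(v)$ in $\F(S')$. Combining the two inequalities gives $\deg(\phi(v)) = \deg(v)$, and moreover the induced map $\link(v) \to \link(\phi(v))$ is a bijection, since it is an injection between finite sets of equal size.

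For surjectivity, I would use that $\F(S')$ is connected (a standard fact for flip graphs). Let $X := \phi(\F(S)) \subset \F(S')$. Then $X$ is nonempty, and the previous paragraph shows that every neighbor in $\F(S')$ of any vertex of $X$ also lies in $X$. Hence $X$ is both open and closed in $\F(S')$, so $X = \F(S')$. The map $\phi$ is therefore a simplicial bijection on vertices, and since adjacency in each flip graph is determined by the set of neighbors, the inverse map is also simplicial. Thus $\phi$ is a simplicial isomorphism, as claimed.

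The main (and only) nontrivial ingredient is Lemma \ref{l:nonflip}, which has already been established; everything else in the corollary reduces to a counting argument together with a standard connectedness argument in $\F(S')$.
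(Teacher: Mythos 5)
Your proof is correct and takes essentially the same route as the paper's: the inequality $\deg(\phi(v))\le\deg(v)$ follows from Lemma \ref{l:nonflip} via counting unflippable arcs, the reverse inequality from injectivity, and surjectivity from connectedness of the target flip graph. You merely spell out details (the valence-equals-number-of-flippable-arcs count, the neighbor-closure argument, and why the inverse is simplicial) that the paper leaves implicit.
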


\begin{proof}
For the first part, observe that Lemma \ref{nonflip} implies $\deg(\phi(v)) \le \deg(v)$; the other inequality is immediate, as $\phi$ is injective. 

To prove that $\phi$ is surjective,  let $w \in \F(S')$. By the connectivity of $\F(S)$, we may assume that $w$ is adjacent to $\phi(v)$, for some $v\in \F(S)$. Since $\deg(\phi(v)) = \deg(v)$, we have that $w=\phi(v')$ for some $v'$ adjacent to $v$, as desired. 
\end{proof}
We are finally in a position to prove Theorem \ref{isom}, which as we noted follows from Proposition \ref{arcs}:

\begin{proof}[Proof of Proposition \ref{arcs}]
First,  since $\phi$ is surjective, part (1) of Lemma \ref{l:3}  implies that $\psi$ is surjective as well. We claim  that $\psi$ is also injective. Let $a, b \in \A(S)$ with $a \ne b$. If $i(a,b)=0$ then $\psi(a)$ and $\psi(b)$ are distinct and disjoint, again by Lemma \ref{l:3} (1), and hence we are done. Thus assume $i(a,b)\ne 0$; equivalently, $\F_a(S) \cap \F_b(S) = \emptyset $. Using the fact that $\phi$ is an isomorphism, we obtain $$ \emptyset= \phi(\F_a(S)\cap \F_b(S)) = \phi(\F_a(S)) \cap \phi(\F_b(S)) = \F_{\psi(a)}(S') \cap \F_{\psi(b)}(S'),$$
and thus $i(\psi(a),\psi(b)) \ne 0$, which is what we set out to prove. 
\end{proof}

\section{Proof of Theorem \ref{main}}
\label{proofmain} We are now in a position to prove our main result. Suppose $S$ is a non-exceptional surface, and let $\phi:\F(S) \to \F(S')$ be an injective simplicial map. By Theorem \ref{invariant-arc}, there exists a multiarc $A$ on $S'$, with $d(S') -d(S)$ elements, such that $A \subset \phi(v)$ for every vertex $v$ of $\F(S)$. In other words, $\phi(\F(S)) \subset \F_A(S')$; recall that $\F_A(S')$ denotes the subgraph of $\F(S')$ spanned by those triangulations of $S'$ that contain $A$. 

Now, $\F_A(S') \cong \F(S'\setminus A)$, where $S' \setminus A$ is the surface obtained from $S'$ by cutting open along every element of $A$; note that $d(S) = d(S' \setminus A)$. The surface $S' \setminus A$ need not be connected; let  $\Sigma_1,\ldots, \Sigma_n$ be its connected components. By slight abuse of notation, the map $\phi$ induces a map $$\phi:\F(S) \to \F(\Sigma_1) \times \ldots \times \F(\Sigma_n).$$ 
Write $\pi_i:  \F(\Sigma_1) \times \ldots \times \F(\Sigma_n) \to \F(\Sigma_i)$ for the projection onto the $i$-th factor. 

\begin{claim}
Up to reordering of the indices, $\pi_i \circ \phi$ is trivial for all $i=2, \ldots, n$. 
\end{claim}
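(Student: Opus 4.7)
The plan is to reduce the claim to a constancy statement about a function $c: \A(S) \to \{1, \ldots, n\}$ attaching to each arc of $S$ the component of $S'\setminus A$ containing its image.

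Mirroring Section 4, for each arc $a\in \A(S)$ I would apply Theorem \ref{invariant-arc} to the restriction $\phi|_{\F_a(S)}: \F_a(S) \to \F(S' \setminus A)$ (noting $d(S\setminus a) = d(S)-1 = d(S'\setminus A) - 1$) to obtain a unique arc $\psi(a)$ on $S'\setminus A$ such that $\phi(\F_a(S)) \subseteq \F_{\psi(a)}(S'\setminus A)$. Since $S'\setminus A = \Sigma_1 \sqcup \cdots \sqcup \Sigma_n$, each $\psi(a)$ lies in a single component $\Sigma_{c(a)}$, defining the function $c$. For any triangulation $v = \{a_1,\ldots,a_{d(S)}\}$ of $S$, the image $\phi(v) = \{\psi(a_1),\ldots,\psi(a_{d(S)})\}$ decomposes as a triangulation of each $\Sigma_i$, so $c$ takes the value $i$ on exactly $d(\Sigma_i)$ of the arcs of $v$. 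Consequently it suffices to exhibit a single triangulation $v$ of $S$ on which $c$ is constant: the common value $i_0$ then forces $d(\Sigma_{i_0}) = d(S)$ and $d(\Sigma_j) = 0$ for $j\ne i_0$. Since $d(\Sigma)=0$ forces $\Sigma$ to be a triangle, $\F(\Sigma_j)$ is a single vertex for $j\ne i_0$, making $\pi_j \circ \phi$ constant.

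The engine is the rigidity fact that pentagons in the product graph $\prod_i \F(\Sigma_i)$ are confined to a single factor. Any 5-cycle in the product projects to a closed walk of length $n_i$ in factor $i$ with $\sum n_i = 5$. Flip graphs contain no loops (as simple graphs) and no triangles (since two successive flips can only return to the starting triangulation by reversing themselves), so $n_i \notin \{1, 3\}$, leaving only the distribution $(5, 0, \ldots, 0)$. By the Korkmaz--Papadopoulos classification, a pentagon in $\F(S)$ arises from a pentagonal region formed by three triangles of some triangulation, whose two interior diagonals are a pair of arcs $a, b \in v$; pentagonal rigidity then forces $c(a) = c(b)$. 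Note that squares are insufficient for this purpose: they admit $(2,2)$ distributions in the product, so opposite edges of a square flip into the same factor but adjacent edges need not.

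It remains to propagate. Given a triangulation $v$ of $S$ with $\deg(v) = d(S)$, consider the graph $G_v$ on arcs of $v$ where $a, b$ are joined iff they arise as the two interior diagonals of a pentagonal configuration supported on $v$. Generically, any pair of arcs of $v$ sharing a common triangle are such diagonals (provided the two external adjacent triangles are distinct and the resulting pentagon is non-degenerate), so $G_v$ contains the dual graph of $v$, which is connected since $S$ is. Thus $c$ is constant on the arcs of $v$, establishing the claim. The main obstacle is ensuring such a ``generic'' triangulation $v$ exists and that degenerate pentagonal configurations (coincidences between the external triangles, or cylinder-like arrangements) do not obstruct the pentagonal chain construction. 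These edge cases are of the same flavor as the obstacles in Section 2, and can be resolved by local surgery modeled on Lemmas \ref{lem:degree} and \ref{lem:cylinder}, or alternatively by first applying Propositions \ref{extend1} and \ref{extend2} to reduce to a triangulation where the pentagonal-connectivity argument runs cleanly.
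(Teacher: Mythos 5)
Your setup (defining $\psi$ via Theorem \ref{invariant-arc} applied to $\phi|_{\F_a(S)}$, and reducing to the constancy of the component function $c$ on the arcs of a single triangulation) matches the paper, and your rigidity observation is correct: a $5$-cycle in a box product of simple, triangle-free graphs must project to a single factor, since the edge counts in the factors would otherwise have to realize $5$ as a sum of numbers from $\{0,2,4\}$. But from there the paper takes a much shorter route than yours, using a tool you already have: part (3) of Lemma \ref{l:3} says $i(a,c)=1$ implies $i(\psi(a),\psi(c))=1$, and two arcs lying in different components of $S'\setminus A$ are disjoint. So for disjoint arcs $a,b$ one picks an auxiliary arc $c$ with $i(a,c)=i(b,c)=1$; then $\psi(c)$ meets both $\psi(a)$ and $\psi(b)$, forcing all three into one component, and the connectivity of $\A(S)$ (with adjacency given by disjointness) propagates this to all arcs. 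Your pentagon machinery re-proves a strictly weaker statement than Lemma \ref{l:3}(3) by harder means.

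More importantly, your propagation step has a genuine gap. The connectivity of $G_v$ is asserted only ``generically,'' and the exceptions are not peripheral: an unflippable arc of $v$ can never be a diagonal of a pentagon configuration (every diagonal gets flipped as one traverses the pentagon in $\F(S)$), so such arcs are isolated vertices of $G_v$; and even for a maximal-degree triangulation, the degenerate configurations you mention (coincident outer triangles, cylinder-type identifications) are exactly the phenomena that consume all of Section 2 of the paper — they cannot be dispatched by an unspecified ``local surgery.'' You would also need to actually exhibit, on an arbitrary non-exceptional surface, one triangulation for which $G_v$ is connected, which is a nontrivial existence claim you do not address. As written, the argument is not complete at the one step where the claim could fail; replacing the pentagon propagation by the intersection-number argument above closes it immediately.
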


\begin{proof}[Proof of the claim]
Using the same notation as in the previous section, let $$\psi:\A(S) \to \A(\Sigma_1) \times \ldots \times \A(\Sigma_n)$$ be the map on arcs graphs induced by $\phi$, and choose $a\in \A(S)$. Up to reordering indices, we may assume  that $\psi(a) \subset \Sigma_1$. We claim that if $b \in \A(S)$ satisfies $i(a,b)=0$, then
$\psi(b) \subset \Sigma_1$ as well. Indeed, it suffices choose a third arc $c$ such that $i(a, c) = i(b,c)=1$, so that 
$i(\psi(a),\psi(c)) = i(\psi(b),\psi(c))=1$ by Lemma \ref{l:3}. 
The claim now follows from since $\A(S)$ is connected. 
\end{proof}

In the light of the claim above, we may view $\phi$ (again abusing notation) as a map $$\phi:\F(S) \to \F(\Sigma_1),$$ noting that $d(S) = d(\Sigma_1)$. Since $S$ is not exceptional and connected, it follows that $\phi$ is induced by a homeomorphism $S \to \Sigma_1$ by Theorem \ref{isom}. This finishes the proof of Theorem \ref{main}.

\end{document}